\numberwithin{equation}{section}
\theoremstyle{plain}
\newtheorem{theorem}{Theorem}[section]
\newtheorem{lemma}[theorem]{Lemma}
\theoremstyle{definition}
\newtheorem{definition}[theorem]{Definition}
\newtheorem{remark}[theorem]{Remark}
\newcommand{\mr}{\mathbb{R}}
\newcommand{\ud}{\mathrm{d}}
\begin{document}
	\title{\Large \bf Asymptotic behavior of conformal metrics with null  Q-curvature}
	\author{Mingxiang Li\thanks{M. Li, Nanjing University,  Email: limx@smail.nju.edu.cn. }}
	\date{}
	\maketitle
	\begin{abstract}
	We  describe the asymptotic behavior of conformal metrics related to the GJMS operator in the null case, as the prescribed Q-curvature $f_0(x) + \lambda$ gradually changes. We show that if  one of the maximum points of $f_0$ is flat up to order $n-1$, the normalized conformal metrics in the lowest energy level will form exactly one spherical bubble as $\lambda$ approaches zero using higher order Bol's inequality. This generalizes the result of Struwe (JEMS, 2020) in the two-dimensional case to higher dimensions and helps rule out the slow bubble case discussed by Ngô and Zhang (arXiv:1903.12054) to some degree.
	\end{abstract}

{\bf Keywords: } Q-curvature, Asymptotic behavior, Higher order Bol's inequality
\medskip

{\bf MSC2020: } 58E30, 35J60, 35J35.
\section{Introduction}

	Given a compact Riemann  surface $(M^2,g)$, the prescribed Gaussian curvature problem in conformal geometry attracts a lot of interest. It is equivalent  to solve the following conformal equation
	\begin{equation}\label{equ: prescribed Gaussian curvature}
		-\Delta_gu+K_g=fe^{2u}
	\end{equation}
	 where $\Delta_g$ is the Laplace-Beltrami operator, $K_g$ is Gaussian curvature of $(M^2,g)$ and $f$ is the prescribed smooth function  defined on $M^2$. The special case where the function $f$ is a constant has been extensively studied  due to the Uniformization theorem. 
	 In the more general case, the problem becomes more complex and has been the subject of ongoing research in the field. The seminal work of Kazdan and Warner \cite{KW} has played a crucial role in the development of this problem  and we highly recommend referring to it for further details.  The equation \eqref{equ: prescribed Gaussian curvature} is commonly known as the Nirenberg problem when $(M^2,g)$ is a standard sphere. In their seminal work \cite{KW}, Kazdan and Warner provided an obstruction, known as Kazdan-Warner's identity, to the existence of solutions to \eqref{equ: prescribed Gaussian curvature}, which significantly increases the difficulty of the Nirenberg problem. Many important contributions have been made towards the solution of this problem, and we refer the interested reader to \cite{Chang-Yang2}, \cite{CYJDG} and many others. In the case of a compact four-dimensional manifold $(M^4,g)$,  Paneitz \cite{Paneitz} introduced a conformal operator defined by
$$P_g^4=\Delta_g^2-\mathrm{div}_g\left((\frac{2}{3}R_gg-2\mathrm{Ric}_g)d\right)$$
	where $R_g$ and $\mathrm{Ric}_g$ denote the scalar curvature and Ricci curvature tensor  of $(M^4,g)$,  respectively. Later, Branson \cite{Bra} introduced the Q-curvature defined  by
	$$Q_g=-\frac{1}{12}(\Delta_gR_g-R_g^2+3|\mathrm{Ric}_g|^2).$$
	 Interestingly, similar to the equation \eqref{equ: prescribed Gaussian curvature}, the prescribed Q-curvature problem is equivalent to solving
	 \begin{equation}\label{equ:prescribed Q-curvature}
	 	P_g^4u+Q^4_g=fe^{4u}
	 \end{equation}
	for the conformal metric $\tilde g=e^{2u}g$ where $Q_g^4=2Q_g$. 
	 For higher order cases, Graham, Jenne, Mason and Sparling \cite{GJMS} introduced an operator known as GJMS operator  $P_g^{n}$ for $(M^n,g)$ satisfying
	 \begin{equation}\label{equ: GJMS operator}
	 	P_g^nu+Q^n_g=Q^n_{\tilde g}e^{nu}
	 \end{equation}
	 where $\tilde g=e^{2u}g$ where $n\geq 4$ is an even integer.
	We refer the interested reader to \cite{CY95}, \cite{DM},  \cite{Ma}, \cite{Nid}  and the references therein for more details.
	
In the current paper, our focus is on the null case, i.e., $Q_g^n\equiv 0$, and we aim to describe the asymptotic behavior of a family of conformal metrics. To facilitate better understanding, we begin with the two-dimensional case.	When the background Gaussian curvature vanishes, Kazdan and Warner \cite{KW} provided a sufficient and necessary condition for the existence of solutions to \eqref{equ: prescribed Gaussian curvature}: $f$ changes sign and $\int_Mf\ud\mu_g<0$ or $f\equiv 0$. It is natural to ask what will happen to the conformal metrics if this condition gradually  violated?  Let us expain it more precisely. Consider  a non-constant smooth function $f_0\leq 0$ on $M$ that vanishes at some points. For $\lambda\in (0,-\frac{1}{vol(M)}\int_Mf_0\ud\mu_g)$, it is easy to check that $f_\lambda:=f_0+\lambda$ satisfies Kazdan-Warner condition, and thus the solutions to \eqref{equ: prescribed Gaussian curvature} exist. 
Galimberti's study in \cite{Ga} focused on the asymptotic behavior of the minimizer of the Dirichlet energy where he found  that as $\lambda$ approaches  zero, $\beta(\lambda)$ tends to infinity monotonically and  the  metrics blow up at finite points. Additionally, he provided several possible characterizations of the bubbles (Theorem 1.2 in \cite{Ga}). Moreover, Galimberti presented a complete description of the case where $\lambda \to-\frac{1}{\text{vol}(M)}\int_Mf_0\ud\mu_g$ and he demonstrated that $\beta(\lambda)$ tends to zero while the metrics  collapse. Subsequently, Struwe \cite{Struwe20} provided a more detailed characterization of the former case by ruling out the possibility of the "slow bubble" scenario:
 \begin{equation}\label{equ: slow bubble}
 	-\Delta_{\mr^2} u=(1+A(x,x))e^{2u}
 \end{equation}
where $A$ is a negative $2\times 2$ matrix  with   $(1+|A(x,x)|)e^{2u}\in L^1(\mr^2)$ which is 
 allowed in Galimberti's theorem (See Theorem 1.1 in \cite{Ga}). In fact, Struwe established a Liouville-type result (refer to Theorem 1.3 in \cite{Struwe20}) that proved the nonexistence of a solution to \eqref{equ: slow bubble}.  He then demonstrated that when $f_0$ is non-degenerate at each maximum point, the metrics can only form at most two spherical bubbles. This result was further improved by the author in \cite{LX23}, in collaboration with Xu, where we showed that the metrics will shape exactly one spherical bubble even in more general cases based on a global version of Alexandrov-Bol's inequality.  Meanwhile, for the general Liouville-type theorem related  to  the equation \eqref{equ: slow bubble} was established in \cite{Li23}. For a similar phenomenon in compact surfaces of higher genus, there have been several studies in the literature, such as \cite{BGS}, \cite{DL}, and \cite{dR}, among others. 
 
In the case of four-dimensional manifolds, Ngô and Zhang \cite{NZ19} also studied a similar phenomenon related to Q-curvature based on an existence result for \eqref{equ:prescribed Q-curvature} (see Theorem 1.1 in \cite{GX}). They showed that the "slow bubble" may occur in the fourth dimension when the maximum points of $f_0$ are non-degenerate. On the other hand, Hyder and Martinazzi \cite{HM} showed that the following equation
 \begin{equation}\label{equ: slow bubble in fourth case}
 	\Delta^2u(x)=(1-|x|^2)e^{4u(x)}
 \end{equation}
with $(1+|x|^2)e^{4u}\in L^1(\mr^4)$ does have solutions.  There are various indications that the "slow bubble" does exist. However, it is interesting that  we will show that the  metrics will  shape exactly one spherecial bubble   in the higher order cases   if there exists a maximum point of  $f_0$ is flat up to $n-1$ order. This means "slow bubble" doesn't exist in this case.  It is worth noting that  we allow some maximum points to be non-degenerated.
 We will utilize the sharp upper bound of energy in Lemma \ref{lem: beta(lambda) upper boud} and higher order Bol's inequality developed in \cite{LW} to rule out the "slow bubble".   
 For convenience, we give  some definitions before  stating our main theorem.
 \begin{definition}\label{def: 2l max point}
 	We say a maximum point $P_0$ of $f\in C^\infty(M)$ is of $l-$type if in the small  neighborhood of $P_0$, considering  the exponential map $\exp_{P_0}: B_{\delta}(0)\subset\mr^n\rightarrow M$, $f\circ\exp_{P_0}(z)$ has the expansion
 	$$f\circ\exp_{P_0}(z)=f(P_0)+p_{l}(z)+O(|z|^{l+1})$$
 	where $p_{l}(z)\leq 0$ is a  homogeneous polynomial of degree $l$ and $p_{l}(z)\not=0$ if $z\not=0$.
 \end{definition}
 \begin{definition}
 	We say $f\in C^\infty(M)$ only has isolated maximum points if each maximum point is $l-$type for some integer $l\geq 2$ and there are only finitely many such maximum points.
 \end{definition}
\begin{definition}
	We say a sequence $\{u_k\}\subset H^{\frac{n}{2}}(M,g)$ forms a spherecial bubble   at $P_0$ if there exists a subsequence as well as  $r_k\to 0$ and $x_k=\exp_{P_0}(z_k)\to P_0$ such that
	$$u_k\circ\exp_P(r_k z+z_k)+\log r_k\rightarrow \log\left(\frac{1}{1+|z|^2}\right),$$
	up to a translation, a scaling or adding some constants,	in $H^{n/2}_{loc}(\mathbb{R}^n).$
\end{definition}

For the sake of convenience, we use the notation $B_R(p)$ to refer to an Euclidean  ball in $\mr^n$ centered at $p\in\mr^n$ with a radius of $R$. Also, $|B_R(p)|$ refers to the volume of $B_R(p)$ in terms of the Euclidean metric.  For a function $\varphi(x)$, the positive part of $\varphi(x)$ is denoted as $\varphi(x)^+$ 
and the negative part of $\varphi(x)$ is denoted as $\varphi(x)^-$.  Set $\fint_{E}\varphi(x)\ud x=\frac{1}{|E|}\int_{E}\varphi(x)\ud x$ for any measurable set $E$.   We denote by $C$ a constant which may be different from line to line. $[s]$ denotes the largest integer not greater than $s$.

Suppose that  $(M,g)$ is a compact manifold of even dimension $n\geq 4$ and  GJMS operator (denoted as $P_g $ for simplicity) is positve with kernel consisting of constants and Q-curvature vanishes.
 Consider a class of smooth functions on $M$ defined as
$$\mathcal{F}:=\{f\in C^\infty(M)| f\leq 0\;\mathrm{and}\;f\not\equiv0, \exists\; x_0\in M, f(x_0)=0      \}$$  	
Given	 $f_0\in\mathcal{F}$,  let
	$f_\lambda(x)=f_0+\lambda$
	where $\lambda\in(0,-\frac{1}{vol(M)}\int_Mf_0\ud\mu_g)$. We define the energy functional 
	$$E(u):=\int_M uP_gu\ud \mu_g,$$ 
	under  the constraint 
	$$\mathcal{M}(\lambda) :=\{u\in H^{\frac{n}{2}}(M,g)|\int_Mf_\lambda e^{nu}\ud\mu_g=0\}, 
	\quad \mathcal{M}^*(\lambda) :=\{u\in \mathcal{M}(\lambda)|\int_M e^{nu}\ud\mu_g=1\}$$
	where $H^{\frac{n}{2}}(M,g)$ denotes the Hilbert  space.
	Based on Theorem \ref{thm: minimizer} below and the property $E(u+c)=E(u)$ for any constant $c$, a minimizer 
	$u_\lambda$ of $E(u)$ in $\mathcal{M}^*(\lambda)$  is achiveved satisfying
	\begin{equation}\label{equ:u_lambda}
		P_gu_\lambda=\alpha_\lambda f_\lambda e^{nu_\lambda}
	\end{equation}
	where $\alpha_\lambda>0$ and set  
	$$\beta(\lambda):=E(u_\lambda).$$
	Now, we state our theorem in current paper.
	\begin{theorem}\label{main theorem}Consider a compact manifold  $(M,g)$ of even dimension $n\geq 4$. Suppose that   the GJMS operator of $(M,g)$  is positive  with kernel consisting of constants and  Q-curvature vanishes.
		Given  a smooth  function $f_0\in\mathcal{F}$ which only has  isolated maximum points including  an $l$-type maximum point with $l\geq n$, then  there exists  a sequence $\lambda_k\downarrow 0$ such that $\{u_{\lambda_k}\}$ shapes only one spherecial bubble at some maximum point $p_0$ of $f_0$. Moreover,
		$u_{\lambda_k}(x)\to -\infty$
		on any compact subset of $M\backslash \{p_0\}$ and 
		$$\lim_{k\to\infty}\lambda_k\alpha_{\lambda_k}=\frac{n}{2}\lim_{k\to\infty}\frac{\beta(\lambda_k)}{\log(1/\lambda_k)}=(n-1)!|\mathbb{S}^n|$$
		where $|\mathbb{S}^n|$ denotes the volume of unit sphere in $\mathbb{R}^{n+1}$.
	\end{theorem}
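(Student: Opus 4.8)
The plan is to run the standard concentration-compactness blow-up analysis for the minimizers $u_{\lambda}$ of the constrained problem \eqref{equ:u_lambda}, but to close the argument by playing the \emph{sharp} energy upper bound from Lemma~\ref{lem: beta(lambda) upper boud} against the \emph{higher order Bol's inequality} of \cite{LW}. First I would establish the crude facts: since $f_0\in\mathcal F$ has a maximum point where $f_0=0$, testing $E$ on a suitable family of ``bubble'' competitors concentrated at an $l$-type maximum point gives $\beta(\lambda)\le (n-1)!|\mathbb S^n|\log(1/\lambda)+C$, and in particular $\beta(\lambda)\to\infty$ and $\lambda\alpha_\lambda$ stays bounded; the flatness hypothesis $l\ge n$ is exactly what makes the error terms in the test-function computation negligible (the volume element of the would-be bubble sees $f_0+\lambda\approx\lambda$ on the relevant scale). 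Then along a sequence $\lambda_k\downarrow0$ I normalize $\int_M e^{nu_{\lambda_k}}\,d\mu_g=1$ and run the blow-up: either $u_{\lambda_k}$ stays bounded in $H^{n/2}$ (excluded, since then the limit would solve $P_gu=0$ forcing $f_\lambda e^{nu}$ to integrate to a positive number, contradicting the constraint as $\lambda\to0$), or it concentrates. At each concentration point one rescales and, using the positivity of $P_g$ and elliptic estimates for the GJMS operator on small balls (where $P_g$ is a perturbation of $\Delta^{n/2}$), extracts a limiting entire solution on $\mathbb R^n$ of $\Delta^{n/2}w = V e^{nw}$ with $Ve^{nw}\in L^1$, where $V=\lim (f_0+\lambda_k)$ rescaled is either $\equiv$ const $>0$ (giving a genuine spherical bubble, after the classification of finite-mass solutions — here one needs the limiting potential to be constant, which is where $l\ge n$ again enters: on the bubble scale $r_k$, $f_0(\exp_{p_0}(r_k z))/\lambda_k \to$ const $=1$ because $f_0$ vanishes to order $\ge n>$ the relevant order) or is a nonconstant $\le0$-perturbation, i.e. the ``slow bubble'' \eqref{equ: slow bubble in fourth case}-type solution that we must exclude.

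The crux is the exclusion of the slow bubble and of multi-bubbling. Here I would use higher order Bol's inequality in the form: for a metric $e^{2w}|dx|^2$ on a domain $\Omega\subset\mathbb R^n$ with $\int_\Omega (\Delta^{n/2}w)^+ \le$ (some threshold related to $(n-1)!|\mathbb S^n|$), the volume $\int_\Omega e^{nw}$ is controlled by an isoperimetric-type quantity; applied globally (in the spirit of \cite{LX23}) this forces the total mass $\lim \lambda_k\alpha_{\lambda_k}$ of the concentrating measure to be \emph{at least} $(n-1)!|\mathbb S^n|$ per bubble, while the energy upper bound $\beta(\lambda)\le (n-1)!|\mathbb S^n|\log(1/\lambda)+C$ forces it to be \emph{at most} $(n-1)!|\mathbb S^n|$ in total. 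Hence exactly one bubble, it carries full mass $(n-1)!|\mathbb S^n|$, and a slow bubble — which would carry mass strictly less than $(n-1)!|\mathbb S^n|$, or would leave residual mass — is impossible. The matching of the two constants is the whole point: the threshold in Bol's inequality is $(n-1)!|\mathbb S^n| = \int_{\mathbb R^n}(n-1)!\,\tfrac{2^n}{(1+|z|^2)^n}\,dz$, the mass of the standard spherical bubble.

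From this dichotomy the remaining assertions follow quickly. The single concentration point $p_0$ must be a maximum point of $f_0$ (otherwise $f_0(p_0)<0$ and the rescaled potential is $\equiv f_0(p_0)<0$ on the bubble scale, but then $\int f_\lambda e^{nu_\lambda}<0$ cannot hold in the limit once all mass has concentrated there — or more simply, a spherical bubble requires the limiting potential nonnegative). Off $p_0$ the mass vanishes, so $e^{nu_{\lambda_k}}\rightharpoonup \delta_{p_0}$ and, by the equation and elliptic estimates away from $p_0$ together with $\int e^{nu_{\lambda_k}}=1$ and the constraint, $u_{\lambda_k}\to-\infty$ locally uniformly on $M\setminus\{p_0\}$. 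Finally, integrating \eqref{equ:u_lambda} gives $\alpha_\lambda\int_M f_\lambda e^{nu_\lambda} = 0$ trivially, so instead I integrate against $1$ after using $P_g\mathbf 1=0$: $0=\int_M P_gu_\lambda = \alpha_\lambda\int_M f_\lambda e^{nu_\lambda}$, and to get the constant I split $\int_M f_\lambda e^{nu_\lambda} = \int_M (f_0)e^{nu_\lambda} + \lambda$; localizing near $p_0$ and using $f_0 = O(|z|^l)$ with $l\ge n$ on the bubble scale shows $\int_M f_0 e^{nu_\lambda} = -\lambda(1+o(1))$ forces $\lambda\alpha_\lambda$ to converge, and comparing with the bubble mass $(n-1)!|\mathbb S^n|$ (via the convergence $r_k^{-n}\!\int e^{nu_{\lambda_k}}$-type normalization and $\beta(\lambda_k) = E(u_{\lambda_k}) \sim (n-1)!|\mathbb S^n|\log(1/\lambda_k)$ from the matching upper and lower bounds) yields $\lim \lambda_k\alpha_{\lambda_k} = \tfrac n2\lim \beta(\lambda_k)/\log(1/\lambda_k) = (n-1)!|\mathbb S^n|$. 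The main obstacle I anticipate is making the blow-up analysis for the \emph{nonlocal-looking} higher order operator $P_g$ rigorous — controlling the lower-order curvature terms, the possible loss of positivity on rescaled domains, and the $H^{n/2}$ (rather than $C^2$) convergence — and then feeding the resulting entire solution into the higher order Bol inequality with the \emph{exact} constant; the sign condition $f_0\le0$ is what keeps $(\Delta^{n/2}w)^+$ below the Bol threshold, so tracking signs carefully throughout is essential.
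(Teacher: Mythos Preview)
Your overall architecture---sharp test-function upper bound on $\beta(\lambda)$, blow-up at maximum points of $f_0$, dichotomy between a spherical and a ``slow'' rescaled limit, and elimination of the latter via higher order Bol's inequality---is the paper's approach. But two of your key quantitative claims are off in a way that would break the argument as written.

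First, the energy upper bound you state, $\beta(\lambda)\le (n-1)!|\mathbb{S}^n|\log(1/\lambda)+C$, is the \emph{generic} bound (quadratic vanishing of $f_0$), not the one that uses $l\ge n$. With an $l$-type maximum point flat to order $\ge n-1$, the test bubble can be concentrated at scale $s\sim\lambda^{-1/n}$ rather than $\lambda^{-1/2}$, and this yields the sharper
\[
\limsup_{\lambda\downarrow 0}\frac{\beta(\lambda)}{\log(1/\lambda)}\le \frac{2}{n}(n-1)!|\mathbb{S}^n|.
\]
Combined with $\beta'(\lambda)=-\tfrac{2}{n}\alpha_\lambda$ (Lemma~\ref{Lemma beta_lambda derivative}), this is what gives $\liminf \lambda\alpha_\lambda\le (n-1)!|\mathbb{S}^n|$ along a sequence. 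Your stated bound would only yield $\lambda\alpha_\lambda\lesssim \tfrac{n}{2}(n-1)!|\mathbb{S}^n|$, and then the contradiction with Bol does \emph{not} close for $n\ge 4$. This is also the only place $l\ge n$ enters; it plays no role in forcing the rescaled potential to be constant (that is governed by whether $r_k^{l}/\lambda_k\to 0$ or to a positive limit, which is decided a posteriori).

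Second, your description of how Bol eliminates the slow bubble has the inequality backwards. In the slow case one lands on a normal solution of $(-\Delta)^{n/2}w=(1+p_{l}(r_0z))e^{nw}$ with $\int e^{nw}\le\mu\le (n-1)!|\mathbb{S}^n|$ (this is where the sharp bound on $\mu=\lim\lambda_k\alpha_{\lambda_k}$ is essential). Higher order Bol (\cite{LW}) then forces $\int_{\mathbb{R}^n}e^{nw}>(n-1)!|\mathbb{S}^n|$, a \emph{strict lower} bound on the volume, which contradicts the budget. It is not that a slow bubble ``carries mass strictly less than $(n-1)!|\mathbb{S}^n|$''; rather, it would require strictly more volume than is available. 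Once the slow case is excluded, the classification of normal solutions with constant $Q$ together with $\int e^{n\hat u_\infty}\le 1$ and $\mu(1+\mu_0)\int e^{n\hat u_\infty}=(n-1)!|\mathbb{S}^n|$ directly pins down $\mu_0=0$, $\mu=(n-1)!|\mathbb{S}^n|$, $\int e^{n\hat u_\infty}=1$, giving the single spherical bubble and the asserted limits without the integration-against-$1$ detour in your last paragraph.
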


During doing  blow-up analysis, we  need to study the following conformally invariant  equation: 
\begin{equation}\label{equ:-Delta ^n/2u=Ke^nu}
	(-\Delta)^{\frac{n}{2}}u(x)=Q(x)e^{nu(x)}\quad \mathrm{on}\;\;\mr^n
\end{equation}
where  $n\geq 2$ is an even integer and $Q(x)$ is a smooth function. Supposing that $Qe^{nu}\in L^1(\mr^n)$,
we say $u(x)$ is a normal solution to \eqref{equ:-Delta ^n/2u=Ke^nu} if $u(x)$ satisfies the integral equation 
\begin{equation}\label{normal solution}
	u(x)=\frac{2}{(n-1)!|\mathbb{S}^n|}\int_{\mr^n}\log\frac{|y|}{|x-y|}Q(y)e^{nu(y)}\ud y+C
\end{equation}
for some constant $C$. More details about normal solutions can be found in Section 2 of \cite{Li 23 Q-curvature}.  Here, we assume that $u(x)$ and $Q(x)$ are both smooth functions on $\mathbb{R}^n$. Although similar results can also be obtained under weaker regularity assumptions,  for the sake of brevity, we will focus on the smooth case throughout  this paper.

In \cite{LX23}, the Alexandrov-Bol's inequality  was used to rule out the "slow bubble" in the two-dimensional case.  For brevity, 
Alexandrov-Bol's inequality on $\mr^2$ shows that if the Guassian curvature has an upper bound, the volume  has a sharp  lower bound. More details can be found in \cite{Ba}, \cite{LW} and the references therein.  In the context of higher-order cases, a natural inquiry arises concerning the existence of analogous Bol's inequalities. In \cite{LW}, the author and J. Wei provide a partial response to this question. Leveraging the findings established in \cite{LW}, we can effectively eliminate the presence of slow bubbles.

Now, we will briefly outline the structure of this paper. In Section \ref{section 2}, we provide another proof of the existence result for null Q-curvature related to GJMS operator. Additionally, we demonstrate the monotonicity of the minimizing energy and its growth control with respect to $\lambda$. In Section \ref{section 3}, we carry out a  blow-up analysis for a sequence of normalized minimizers of energy.  Finally,  in Section \ref{section 4}, we present a proof of our main Theorem \ref{main theorem} with help of higher order Bol's inequality.

\section{Existence of minimizer and monotonicity of energy}\label{section 2}
	Firstly,  given a sign-changing smooth function $f(x)$ on $(M,g)$,  we  give  a proof of the existence of the minimizer of energy $E(u)$ under the constraint 
	$$\mathcal{M}:=\{u\in H^{\frac{n}{2}}(M)|\int_M fe^{nu}\ud\mu_g=0\}.$$
	Actually, it has been proved by Ge and Xu in \cite{GX}. Besides, Ng\^o and Zhang in \cite{NZ} gave another proof in their appendix.  However, our proof has its own interest since the continuity trick during the proof of  monotonicity of energy will play a very important role.   Meanwhile,	compared with Corollary 2.4 in \cite{BFR}, we remove the restriction 
	$$\int_M|\Delta_gu|^2\ud\mu_g< \sigma,$$
	for some positive constant $\sigma$ depending on $(M,g)$.

\begin{theorem}\label{thm: minimizer}
Given  a smooth compact manifold $(M^n,g)$ with unit volume  where $n\geq 4$ is an even integer.  Assume that $P_g$ is positive  GJMS operator  with kernel consisting of constant functions. For any smooth and sign-changing function $f(x)$,  the  minimizers  of  $E(u)$ with $u\in \mathcal{M}$  exist. If we choose a minimizer  denoted as $u_0$,  up to a constant, $u_0$ satisfies  
\begin{equation}\label{equ:minimizer u_0}
	P_gu_0=sgn(-\bar f)fe^{nu_0}
\end{equation}
	where $\bar f=\int_Mf\ud\mu_g$ and $sgn$ is the sign function.
\end{theorem}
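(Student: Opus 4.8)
The plan is to run the direct method in the calculus of variations, with the higher order Moser--Trudinger (Adams--Fontana) inequality for $P_g$ as the only non-elementary input, and then to pin down the sign of the Lagrange multiplier by a continuity argument in an auxiliary parameter. Two preliminary observations: first, $\mathcal{M}\neq\emptyset$ --- when $\bar f=0$ the function $u\equiv 0$ works, and when $\bar f\neq 0$ one picks a nonzero $\phi\in C^\infty(M)$ with $\phi\geq 0$ supported in $\{f>0\}$ (if $\bar f<0$) or in $\{f<0\}$ (if $\bar f>0$) and notes that $t\mapsto\int_M fe^{nt\phi}\ud\mu_g$ starts at $\bar f$ and runs to $\pm\infty$, hence vanishes at some $t_0$, so $t_0\phi\in\mathcal{M}$. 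Second, $E(u)=\langle u,P_gu\rangle_{L^2}=\|P_g^{1/2}u\|_{L^2}^2\geq 0$ vanishes exactly on $\ker P_g=\mathbb{R}$, and since both $E$ and $\mathcal{M}$ are invariant under $u\mapsto u+c$ one may restrict to the slice $\int_M u\,\ud\mu_g=0$; there, positivity of $P_g$ together with elliptic regularity gives $E(u)\geq c_0\|u\|_{H^{n/2}}^2$, so $E$ is coercive on that slice.

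Given this, take a minimizing sequence $u_k\in\mathcal{M}$ with $\int_M u_k\,\ud\mu_g=0$; by coercivity it is bounded in $H^{\frac{n}{2}}(M,g)$, so along a subsequence $u_k\rightharpoonup u_0$ in $H^{\frac{n}{2}}$, $u_k\to u_0$ in every $L^p$ and a.e.. Applying the Adams--Fontana inequality with a little room, $\{e^{nu_k}\}$ is bounded in $L^2(M)$, hence equi-integrable, and Vitali's theorem gives $e^{nu_k}\to e^{nu_0}$ in $L^1(M)$; therefore $\int_M fe^{nu_0}\,\ud\mu_g=\lim_k\int_M fe^{nu_k}\,\ud\mu_g=0$, so $u_0\in\mathcal{M}$, and weak lower semicontinuity of $u\mapsto\|P_g^{1/2}u\|_{L^2}^2$ yields $E(u_0)\leq\liminf_k E(u_k)=\inf_{\mathcal{M}}E$; thus $u_0$ is a minimizer. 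The constraint map $G(u)=\int_M fe^{nu}\,\ud\mu_g$ has $\langle G'(u_0),f\rangle=n\int_M f^2e^{nu_0}\,\ud\mu_g>0$, so $\{G=0\}$ is a $C^1$ hypersurface near $u_0$ and the Lagrange multiplier rule gives $P_gu_0=\mu\, fe^{nu_0}$ for some $\mu\in\mathbb{R}$. Once $\mu\neq 0$ is known, replacing $u_0$ by $u_0+\tfrac1n\log|\mu|$ (again a minimizer lying in $\mathcal{M}$) reduces the statement to showing $\mathrm{sgn}\,\mu=\mathrm{sgn}(-\bar f)$.

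For the sign: if $\bar f=0$ the constant minimizer satisfies $P_gu_0=0$ and we are done, so after possibly replacing $f$ by $-f$ assume $\bar f<0$. Then $0\notin\mathcal{M}$, the minimizer $u_0$ is non-constant, $E(u_0)>0$, and $\mu\neq 0$ because $\mu=0$ would force $u_0\in\ker P_g$. To see $\mu>0$ I would embed $f$ in the family $f_s:=f+s$, $s\in[0,-\bar f)$, each sign-changing with $\overline{f_s}<0$, and for each $s$ let $u_s$ (normalized by $\int_M e^{nu_s}\,\ud\mu_g=1$) be a minimizer with multiplier $\mu(s)\neq 0$, so $P_gu_s=\mu(s)f_se^{nu_s}$. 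The continuity trick is to show that along this family the energy $\beta(s):=E(u_s)$ and the multiplier vary continuously without loss of compactness; then $\mathrm{sgn}\,\mu(s)$ is constant on $[0,-\bar f)$, and it suffices to evaluate it as $s\uparrow-\bar f$. There, writing $h:=f-\bar f$ (mean-zero, sign-changing) and $s=-\bar f-\delta$, the constraints force $u_s\to 0$; expanding $u_s=\delta v+o(\delta)$ with $\int_M v\,\ud\mu_g=0$ and matching orders in $P_gu_s=\mu(s)(h-\delta)e^{nu_s}$ gives $\mu(s)=\delta\mu_*+o(\delta)$ and $P_gv=\mu_* h$, while $\int_M (h-\delta)e^{nu_s}\,\ud\mu_g=0$ forces $\langle h,v\rangle_{L^2}=\tfrac1n+o(1)$; since $P_g^{-1}$ is positive on mean-zero functions, $\mu_*\langle h,P_g^{-1}h\rangle_{L^2}=\tfrac1n>0$ gives $\mu_*>0$, hence $\mu(s)>0$ near $-\bar f$ and therefore $\mu=\mu(0)>0$, i.e. $P_gu_0=\mathrm{sgn}(-\bar f)\,fe^{nu_0}$ after the above rescaling.

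The genuinely delicate step is this last one. In contrast with the two-dimensional case there is no Kazdan--Warner integral obstruction for the high-order operator $P_g$ (the inequality $\int_M(-\Delta u)e^{-2u}\,\ud\mu_g\leq 0$ has no analogue after integrating by parts $\tfrac n2$ times), so the sign of $\mu$ cannot be read off from a single integration by parts. Making the continuity argument rigorous --- establishing that $\beta(s)$ and the pair (minimizer, multiplier) depend continuously on $s$, with no bubbling in the family and no jump of the multiplier, and that the sign is in fact independent of the chosen minimizer --- is precisely the ``continuity trick'' that also underlies the monotonicity of $\beta(\lambda)$, and this is where the main work lies; the remaining steps are routine given the Adams--Fontana inequality and the positivity of $P_g$ with kernel the constants.
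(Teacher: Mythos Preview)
Your existence half (non-emptiness of $\mathcal{M}$, coercivity on the mean-zero slice from positivity of $P_g$, Adams--Fontana to get equi-integrability of $e^{nu_k}$, weak lower semicontinuity of $E$) matches the paper's argument up to cosmetic differences.

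The genuine gap is in the sign of the multiplier. Your proposed route --- embed $f$ in the family $f_s=f+s$, track $\mu(s)$ along $s\in[0,-\bar f)$, and read off its sign from a linearization at $s\uparrow-\bar f$ --- is \emph{not} the paper's ``continuity trick'', and it is incomplete. You acknowledge that making the pair (minimizer, multiplier) vary continuously in $s$ is ``where the main work lies'', but this is in fact problematic: minimizers of $E$ on $\mathcal{M}(f_s)$ need not be unique, so $\mu(s)$ is not even a well-defined function of $s$, and ruling out jumps or loss of compactness along the family is at least as hard as the original question. Moreover, even if carried out, it would only pin down the sign for one selected minimizer, whereas the statement is for an arbitrary minimizer.

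The paper's argument is much more elementary and uses only the fixed minimizer $u_0$. Assume $\bar f<0$ and suppose, for contradiction, that $\mu<0$. Set $\psi(t)=\int_M f\,e^{ntu_0}\,\ud\mu_g$ for $t\in[0,1]$. Then $\psi\in C^1$, $\psi(0)=\bar f<0$, $\psi(1)=0$, and from $P_gu_0=\mu f e^{nu_0}$ one computes
\[
\psi'(1)=n\int_M u_0\,f\,e^{nu_0}\,\ud\mu_g=\frac{n}{\mu}\,E(u_0)<0.
\]
Hence $\psi(t_1)>0$ for some $t_1\in(0,1)$, and by the intermediate value theorem $\psi(t_2)=0$ for some $t_2\in(0,t_1)$. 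But then $t_2u_0\in\mathcal{M}$ with $E(t_2u_0)=t_2^{2}E(u_0)<E(u_0)$, contradicting minimality. This scaling-plus-IVT device, applied to a \emph{single} function with the parameter $t$ in the exponent (not to a family of variational problems), is precisely the ``continuity trick'' the paper refers to; it reappears verbatim in the proof that $\beta(\lambda)$ is strictly decreasing. Your proposal misses this idea and replaces it with something substantially harder that is not carried out.
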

\begin{proof}
The case $\bar f=0$  is trivial since all  constant functions are the minimizers lying in $\mathcal{M}$ as well as the assumption on the kernel of $P_g$.

 As for $\bar f\not=0$, we just need consider the case $\bar f<0$ since we can do the same following programe by considering $-f$.
	Suppose $f(x_0)=\max_M f(x)>0$ and then
	there exists $\delta>0$ such that $f(x)\geq \frac{1}{2}f(x_0)$ for any $x\in B_{\delta}(x_0)$.
	We could  choose a non-negative cut-off function $\eta(x)$ such that $\eta(x)$ equals to $c$ in $B_{\delta/2}(x_0)$ to be chosen later and vanishes outside $B_\delta(x_0)$.
	 Then direct computation yields that 
	 \begin{align*}
	 \int_M fe^{n\eta}\ud\mu_g=&\int_Mf(e^{n\eta}-1)\ud\mu_g+\int_M f\ud\mu_g\\
	 =&\int_{B_{\delta}(x_0)}f(e^{n\eta}-1)\ud\mu_g+\int_Mf\ud\mu_g\\
	 \geq&\int_{B_{\delta/2}(x_0)}\frac{f(x_0)}{2}(e^{nc}-1)\ud\mu_g+\int_M f\ud\mu_g>0
	 \end{align*}
	if we choose sufficiently large $c$.
	Set
	$$\varphi(t)=\int_Mfe^{tn\eta}\ud\mu_g$$
	where $t\in [0,1]$ and then immediately $\varphi(1)>0$ and $\varphi(0)<0$. It is not hard to check that  $\varphi(t)$ is continuous respect to $t$. Then there exists $t_0\in(0,1)$ such that $\varphi(t_0)=0$ which concludes that $t_0\eta\in\mathcal{M}.$  Thus we show that  $\mathcal{M}$ is not empty. 
	
	Due to $P_g$  positive, $E(u)$ is bounded from below by zero in $\mathcal{M}$. Noticing  that $E(u+c)=E(u)$,  we can normalize $\mathcal{M}$ as
	$$\mathcal{M}_f=\{u\in \mathcal{M}|\int_Me^{nu}\ud\mu_g=1\}.$$
	Thus  we have 
	\begin{equation}\label{inf E(u)}
		\mathcal{A}:=\inf_{u\in\mathcal{M}}E(u)=\inf_{u\in\mathcal{M}_f}E(u).
	\end{equation} 
	Suppose  a sequence $\{u_i\}\subset \mathcal{M}_f$ with $E(u_i)\leq C_0$ minimizing $E(u)$ where $C_0$ is a constant independent of $i$.
	Jensen's inequality implies that
	$$\bar u_i:=\int_Mu_i\ud\mu_g\leq\frac{1}{n} \log\int_Me^{nu_i}\ud\mu_g=0.$$
	Adams-Fontana inequality(See \cite{CY95},\cite{Nid}) yields that
	$$-n\bar u_i=\log\int_Me^{n(u_i-\bar u_i)}\ud\mu_g\leq C\int_Mu_iP_gu_i\ud\mu_g+C\leq C.$$
	Combing above two estimates, one has
	$$|\bar u_i|\leq C.$$
	Based on the  positivity assumption on $P_g$ as well as  interpolation inequality, we have
	$$\|u_i\|_{H^{n/2}(M)}\leq CE(u_i)+C|\bar u_i|\leq C.$$
	Then there exists a subsequnce of $\{u_i\}$ weakly converges to $u_\infty$ in $H^{\frac{n}{2}}(M)$ and strongly in $L^p(M) $ for any $1\leq p<+\infty$. Then Adams-Fontana's  inequality yields that for any $q>1$
	$$\int_Me^{qu_\infty}\ud\mu_g\leq C\exp(C(q)\|u_\infty\|_{H^{\frac{n}{2}}(M)}).$$
	 Following the argument in  Section 3 in \cite{KW} and the fact $|e^t-1|\leq |t|(e^{t}+e^{-t})$, there holds
	\begin{align*}
		&\int_M(e^{nu_i}-e^{nu_\infty})^2\ud\mu_g\\
		=& \int_Me^{2nu_\infty}(e^{n(u_i-u_\infty)}-1)^2\ud\mu_g\\
		\leq &\int_Me^{2nu_\infty}\left(e^{n(u_i-u_\infty)}+e^{-n(u_i-u_\infty)}\right)^2n^2|u_i-u_\infty|^2\ud\mu_g\\
		\leq &n^2(\int_Me^{8nu_\infty}\ud\mu_g)^{1/4}(\int_M\left(e^{n(u_i-u_\infty)}+e^{-n(u_i-u_\infty)}\right)^8\ud\mu_g)^{1/4}\|u_i-u_\infty\|_{L^4(M)}^2\\
		\leq & C\|u_i-u_\infty\|_{L^4(M)}^2\to 0
	\end{align*}
which concludes  that $u_\infty\in \mathcal{M}_f$. 
	Using the theory of Lagrange multiplier,  $u_\infty$ weakly solves the following equation
	\begin{equation}\label{equ:u-infty}
		P_gu_\infty=\lambda_1fe^{nu_\infty}+\lambda_2e^{nu_\infty}.
	\end{equation}
	Integrating  \eqref{equ:u-infty} over $M$, we  get $\lambda_2=0$. Obviously, $\lambda_1\not=0$ otherwise $u_\infty$ is a constant which contradicts to $\bar f<0$. Moreover, there holds $\mathcal{A}=E(u_\infty)>0$.
	
	We claim  $\lambda_1>0$ and argue by contradiction.  Supposing $\lambda_1<0$,  consider 
	$$\psi(t)=\int_Mfe^{ntu_\infty}$$
	where $t\in [0,1]$. 
	Taking the  same strategy as before and using the fact $|e^t-1-t|\leq t^2(e^t+e^{-t})$, for fixed  $t_0$ and $|\epsilon|<1$, we have 
	\begin{align*}
		&|\psi(t_0+\epsilon)-\psi(t_0)-\epsilon n\int_Mfe^{n t_0u_\infty}u_\infty\ud\mu_g|\\
		=&|\int_Mfe^{nt_0u_\infty}(e^{n\epsilon u_\infty}-1-n\epsilon u_\infty)\ud\mu_g|\\
		\leq &\int_M|f|e^{nt_0 u_\infty}\epsilon^2n^2u_\infty^2(e^{n\epsilon u_\infty}+e^{-n\epsilon u_\infty})^2\ud\mu_g\\
		\leq &C(f, n, t_0,  \|u_\infty\|_{H^{\frac{n}{2}}(M)})\epsilon^2
	\end{align*}
	Hence, $\psi(t)\in C^1$.
	Meanwhile, $\psi(1)=0$ due to $u_\infty\in \mathcal{M}_f$. Direct computation yields that
	$$\psi'(1)=n\int_Mu_\infty fe^{nu_\infty}\ud\mu_g=\lambda_1\int_Mu_\infty P_gu_\infty \ud\mu_g<0.$$
	Then there exists $t_1\in (0,1)$ such that $\psi(t_1)>0$. Recall $\psi(0)=\int_Mf\ud\mu_g<0$ and immediately there exists $t_2\in (0,t_1)$ such that $\psi(t_2)=0$. Thus $t_2u_\infty\in\mathcal{M}$. Although $t_2u_\infty$ doesn't belong to $\mathcal{M}_f$, the property \eqref{inf E(u)} shows that
	$$\mathcal{A}\leq E(t_2 u_\infty)=t_2^2\mathcal{A}<\mathcal{A}$$
	which is impossible. Hence $\lambda_1>0$.
	Then
	$\tilde u_\infty=u_\infty+\frac{1}{n}\log\lambda_1$
	satisfies
	$$P_g\tilde u_\infty=sgn(-\bar f)fe^{n\tilde u_\infty}.$$
\end{proof}

Now, we are going to use the  above continuity trick to show the monotonicity of energy respect to $\lambda$.
\begin{lemma}\label{trick lemma}
	$\beta(\lambda)$ is strictly decreasing in $(0,-\bar f_0)$.
\end{lemma}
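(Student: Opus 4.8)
The plan is to compare the minimizing energies at two parameter values $0<\lambda<\lambda'<-\bar f_0$ by producing, from a minimizer $u_{\lambda'}\in\mathcal{M}^*(\lambda')$ at the larger parameter, a competitor function $v\in\mathcal{M}^*(\lambda)$ at the smaller parameter with $E(v)\le E(u_{\lambda'})$, and then arguing the inequality must be strict. The natural competitor is $v=u_{\lambda'}+c$ for a suitable constant $c$, since $E$ is invariant under adding constants; but adding a constant moves the volume constraint $\int_M e^{nu}\,d\mu_g=1$, so we instead work on the un-normalized constraint set. Concretely, since $\int_M f_{\lambda'}e^{nu_{\lambda'}}\,d\mu_g=0$ and $f_\lambda=f_{\lambda'}-(\lambda'-\lambda)$ with $\lambda'-\lambda>0$, we have $\int_M f_\lambda e^{nu_{\lambda'}}\,d\mu_g=-(\lambda'-\lambda)\int_M e^{nu_{\lambda'}}\,d\mu_g<0$. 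Thus $u_{\lambda'}$ itself lies in the region where $\int_M f_\lambda e^{nu}\,d\mu_g<0$, and we need to deform it to land on $\mathcal{M}(\lambda)$.

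The key step is the same continuity/intermediate-value trick used in the proof of Theorem \ref{thm: minimizer}. Let $P_0$ be a point where $f_0$ attains its maximum $0$ (this exists by the definition of $\mathcal{F}$), choose $\delta>0$ with $f_\lambda\ge \lambda/2>0$ on $B_\delta(P_0)$, and pick a non-negative cut-off $\eta$ supported in $B_\delta(P_0)$, equal to a large constant on $B_{\delta/2}(P_0)$; then $\int_M f_\lambda e^{n(u_{\lambda'}+s\eta)}\,d\mu_g>0$ for $s$ large. Define $\varphi(s)=\int_M f_\lambda e^{n(u_{\lambda'}+s\eta)}\,d\mu_g$ for $s\in[0,\infty)$; it is continuous (indeed $C^1$, by the dominated-convergence estimate $|e^t-1|\le |t|(e^t+e^{-t})$ already exploited in the excerpt), $\varphi(0)<0$, and $\varphi(s)>0$ for $s$ large, so there is $s_0>0$ with $\varphi(s_0)=0$, i.e. $u_{\lambda'}+s_0\eta\in\mathcal{M}(\lambda)$. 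By the normalization identity \eqref{inf E(u)} (which holds verbatim for $f_\lambda$), $\beta(\lambda)=\inf_{\mathcal{M}(\lambda)}E\le E(u_{\lambda'}+s_0\eta)$. Now I would expand $E(u_{\lambda'}+s_0\eta)=E(u_{\lambda'})+2s_0\int_M \eta P_g u_{\lambda'}\,d\mu_g+s_0^2\int_M \eta P_g\eta\,d\mu_g$; using the Euler–Lagrange equation $P_g u_{\lambda'}=\alpha_{\lambda'}f_{\lambda'}e^{nu_{\lambda'}}$ and $\eta\ge 0$ supported where $f_{\lambda'}=f_0+\lambda'$ could still be negative, this cross term is not obviously signed, so this direct route needs care.

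The cleaner execution, and the one I expect to use, is to choose the competitor as $u_{\lambda'}+s_0\eta$ only to establish $\beta(\lambda)\le\beta(\lambda')$ and then upgrade to strictness separately. For the non-strict bound, replace $\eta$ by a cut-off that is \emph{constant} (say equal to a constant $a$) on all of $M$ except a thin shell — more precisely, exploit that we may instead add a genuine constant to $u_{\lambda'}$: set $w=u_{\lambda'}+\tfrac1n\log t$ for $t>1$, which satisfies $\int_M f_\lambda e^{nw}\,d\mu_g=t\int_M f_\lambda e^{nu_{\lambda'}}\,d\mu_g<0$, still not zero, so this alone does not work either; hence the localized bump $\eta$ is genuinely needed and one must absorb the sign of the cross term. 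I would handle it by noting $\eta$ can be taken supported in an arbitrarily small ball around $P_0$ where $f_{\lambda'}>0$ once $\lambda'$ is fixed (shrink $\delta$ so that $f_0+\lambda'\ge 0$ there, possible since $f_0(P_0)=0$ and $f_0$ continuous), making $\int_M\eta P_gu_{\lambda'}\,d\mu_g=\alpha_{\lambda'}\int_M\eta f_{\lambda'}e^{nu_{\lambda'}}\,d\mu_g\ge 0$; but then $\varphi(0)<0$ forces $s_0>0$, and since the two added terms are $\ge 0$ we only get $E(u_{\lambda'}+s_0\eta)\ge E(u_{\lambda'})$, the wrong direction. The resolution is to go the other way: start from $u_\lambda$ (the minimizer at the \emph{smaller} $\lambda$) and deform it to a competitor in $\mathcal{M}(\lambda')$. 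Since $\int_M f_{\lambda'}e^{nu_\lambda}\,d\mu_g=(\lambda'-\lambda)\int_M e^{nu_\lambda}\,d\mu_g=\lambda'-\lambda>0$ while $\int_M f_{\lambda'}e^{n(u_\lambda-s\eta)}\,d\mu_g<0$ for $s$ large (the bump now makes the integrand more negative where $f_0<0$), continuity gives $s_0>0$ with $u_\lambda-s_0\eta\in\mathcal{M}(\lambda')$, hence $\beta(\lambda')\le E(u_\lambda-s_0\eta)=\beta(\lambda)-2s_0\int_M\eta P_gu_\lambda\,d\mu_g+s_0^2\int_M\eta P_g\eta\,d\mu_g$, and $\int_M\eta P_gu_\lambda\,d\mu_g=\alpha_\lambda\int_M\eta f_\lambda e^{nu_\lambda}\,d\mu_g\ge 0$ when $\eta$ is supported near a maximum point of $f_0$ as above; this still leaves a positive $s_0^2$ term. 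The genuinely clean fix, which I will adopt, is to use a \emph{two-sided} test perturbation: take $\eta$ with $\int_M\eta\,P_g\eta\,d\mu_g$ made negligible relative to the linear term by scaling, or better, replace the ad hoc bump by the natural variation $u_\lambda+s(u_{\lambda'}-u_\lambda)$, a path inside a finite-dimensional family, and differentiate. I expect the technical heart — and the main obstacle — to be exactly this: choosing the perturbation family so that the first-order term in $E$ strictly decreases along the deformation toward the constraint set of the other parameter, which is essentially the content of the envelope/Hadamard-type formula $\beta'(\lambda)=-\alpha_\lambda\int_M e^{nu_\lambda}\,d\mu_g<0$ obtained by differentiating the constrained minimum value; I will justify this differentiability via the $C^1$-dependence established by the continuity estimates above and conclude strict monotonicity.
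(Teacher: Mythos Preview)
Your proposal does not arrive at a proof: each additive perturbation you try (adding a constant, adding or subtracting a localized bump) runs into a sign problem with the quadratic term $s_0^2\int_M\eta P_g\eta\,d\mu_g\ge 0$, and your final fallback to an envelope formula $\beta'(\lambda)<0$ defers all the work to a regularity statement you do not establish (and which in the paper is proved \emph{after} this lemma, in Lemma~\ref{Lemma beta_lambda derivative}).

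The idea you are missing is to use a \emph{multiplicative} rather than additive deformation. Fix $0<\lambda_1<\lambda_2<-\bar f_0$ and the minimizer $u_{\lambda_1}\in\mathcal{M}^*(\lambda_1)$. Consider the one-parameter family $t\mapsto (1-t)u_{\lambda_1}$ for $t\in[0,1]$ and set
\[
\xi(t)=\int_M f_{\lambda_2}\,e^{n(1-t)u_{\lambda_1}}\,d\mu_g.
\]
Then $\xi(0)=\int_M(f_{\lambda_1}+\lambda_2-\lambda_1)e^{nu_{\lambda_1}}\,d\mu_g=\lambda_2-\lambda_1>0$ (using both constraints in $\mathcal{M}^*(\lambda_1)$) and $\xi(1)=\bar f_0+\lambda_2<0$, so by continuity there is $t_1\in(0,1)$ with $(1-t_1)u_{\lambda_1}\in\mathcal{M}(\lambda_2)$. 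Since $E$ is quadratic,
\[
\beta(\lambda_2)\le E\big((1-t_1)u_{\lambda_1}\big)=(1-t_1)^2\,\beta(\lambda_1)<\beta(\lambda_1),
\]
and strictness is automatic because $t_1>0$ and $\beta(\lambda_1)>0$. The point is that scaling $u$ makes $E$ scale exactly as a square, so no cross term appears and no sign analysis is needed; this is precisely the ``continuity trick'' already used in the proof of Theorem~\ref{thm: minimizer} to show $\lambda_1>0$.
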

\begin{proof}
	Given $0<\lambda_1<\lambda_2<-\bar f_0$,  by Theorem \ref{thm: minimizer}, we know that $\beta_{\lambda}$ is achieved by some $u_\lambda\in \mathcal{M}^*(\lambda)$ for any $\lambda\in [\lambda_1,\lambda_2]$. Set $$\xi(t)=\int_Mf_{\lambda_2}e^{n(1-t)u_{\lambda_1}}\ud\mu_g.$$ Notice that
	$$\xi(0)=\int_M(f_{\lambda_1}+\lambda_2-\lambda_1)e^{nu_{\lambda_1}}\ud\mu_g=\lambda_2-\lambda_1>0,$$
	and $\xi(1)<0$. Making use of the continuity of $\xi(t)$, there exists $t_1\in (0,1)$ such that $\xi(t_1)=0$. Hence
	$$\beta(\lambda_2)\leq\int_M(1-t_1) u_{\lambda_1}P_g(1-t_1)u_{\lambda_1}\ud\mu_g=(1-t_1)^2\beta(\lambda_1)<\beta(\lambda_1).$$
\end{proof}

Applying Lebesgue's Theorem, we observe that $\beta(\lambda)$ is almost everywhere differentiable. 
\begin{lemma}\label{Lemma beta_lambda derivative}
	$\beta(\lambda)$ is Lipschitz continuous on compact subset of $(0,-\bar f_0)$ and 
	$\beta'(\lambda)=-\frac{2\alpha_\lambda}{n}$ a.e.
\end{lemma}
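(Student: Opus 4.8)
The plan is to establish the two assertions of Lemma~\ref{Lemma beta_lambda derivative} separately, using the monotonicity from Lemma~\ref{trick lemma} together with the Euler--Lagrange equation \eqref{equ:u_lambda} and the continuity trick. First I would prove the Lipschitz bound. Fix a compact interval $[a,b]\subset(0,-\bar f_0)$ and take $a\le\lambda_1<\lambda_2\le b$. Running the argument of Lemma~\ref{trick lemma} in both directions gives comparison in one direction; for the reverse direction I would instead test $E$ on a suitable rescaling of $u_{\lambda_2}$: set $\zeta(t)=\int_M f_{\lambda_1}e^{n(1-t)u_{\lambda_2}}\,\ud\mu_g$ and note $\zeta(0)=\lambda_1-\lambda_2<0$ while $\zeta(1)=\int_M f_{\lambda_1}\,\ud\mu_g<0$, so this particular deformation does not immediately produce a zero; the cleaner route is to observe that for $t$ slightly negative $\zeta(t)>0$ because $\tfrac{d}{dt}\zeta(t)\big|_{t=1}$ has a sign coming from \eqref{equ:u_lambda}, exactly as in the proof of Theorem~\ref{thm: minimizer}, so there is $t_*$ near $0$ with $\zeta(t_*)=0$ and $(1-t_*)u_{\lambda_2}\in\mathcal M(\lambda_1)$, giving $\beta(\lambda_1)\le(1-t_*)^2\beta(\lambda_2)$. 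Quantifying $|t_*|=O(|\lambda_1-\lambda_2|)$ via the implicit function theorem applied to $\zeta$ (whose derivative at $t=1$ is $\alpha_{\lambda_2}^{-1}E(u_{\lambda_2})\neq0$, using \eqref{equ:u_lambda}) yields $|\beta(\lambda_1)-\beta(\lambda_2)|\le C|\lambda_1-\lambda_2|$ on $[a,b]$, the constant depending only on uniform bounds for $\beta$ and $\alpha_\lambda$ on $[a,b]$.

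Once Lipschitz continuity is known, $\beta$ is differentiable almost everywhere by Rademacher/Lebesgue, and it remains to compute $\beta'(\lambda)$ at a point $\lambda_0$ of differentiability. The idea is a two-sided comparison that pins the derivative. For the upper bound on the difference quotient from the right, I reuse the deformation in Lemma~\ref{trick lemma}: with $\xi(t)=\int_M f_{\lambda_0+h}e^{n(1-t)u_{\lambda_0}}\,\ud\mu_g$ for small $h>0$, one has $\xi(0)=h>0$, and solving $\xi(t_h)=0$ with $t_h\to0$; expanding $\xi$ to first order in $t$ around $t=0$ and using $\int_M f_{\lambda_0}e^{nu_{\lambda_0}}=0$ together with $P_gu_{\lambda_0}=\alpha_{\lambda_0}f_{\lambda_0}e^{nu_{\lambda_0}}$ gives $t_h=\dfrac{h}{n\int_M u_{\lambda_0}f_{\lambda_0}e^{nu_{\lambda_0}}\,\ud\mu_g}+o(h)=\dfrac{\alpha_{\lambda_0}h}{nE(u_{\lambda_0})}+o(h)=\dfrac{\alpha_{\lambda_0}h}{n\beta(\lambda_0)}+o(h)$. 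Hence
\begin{equation*}
	\beta(\lambda_0+h)\le (1-t_h)^2\beta(\lambda_0)=\beta(\lambda_0)-\frac{2\alpha_{\lambda_0}}{n}h+o(h),
\end{equation*}
so $\limsup_{h\downarrow0}\frac{\beta(\lambda_0+h)-\beta(\lambda_0)}{h}\le-\frac{2\alpha_{\lambda_0}}{n}$.

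For the matching lower bound I run the symmetric argument starting from $u_{\lambda_0+h}$: deform $\eta(t)=\int_M f_{\lambda_0}e^{n(1-t)u_{\lambda_0+h}}\,\ud\mu_g$, note $\eta(0)=\int_M(f_{\lambda_0+h}-h)e^{nu_{\lambda_0+h}}\,\ud\mu_g=-h<0$ and that $\eta'(0)<0$ by the same computation, so I instead look at $t$ slightly negative where $\eta$ becomes positive, obtaining $s_h<0$ with $s_h=-\frac{\alpha_{\lambda_0+h}h}{n\beta(\lambda_0+h)}+o(h)$ and $(1-s_h)u_{\lambda_0+h}\in\mathcal M(\lambda_0)$, whence $\beta(\lambda_0)\le(1-s_h)^2\beta(\lambda_0+h)$. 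Since $\alpha_\lambda$ and $\beta(\lambda)$ are bounded on $[a,b]$ (and, by the uniform estimates in the proof of Theorem~\ref{thm: minimizer}, $\alpha_{\lambda_0+h}\to\alpha_{\lambda_0}$ along the sequence — or, at a differentiability point, we only need the $\limsup$/$\liminf$ to agree), rearranging gives $\liminf_{h\downarrow0}\frac{\beta(\lambda_0+h)-\beta(\lambda_0)}{h}\ge-\frac{2\alpha_{\lambda_0}}{n}$, and a symmetric treatment of $h<0$ handles the left derivative. Combining, $\beta'(\lambda_0)=-\frac{2\alpha_{\lambda_0}}{n}$ at every point of differentiability, i.e.\ almost everywhere.

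I expect the main obstacle to be the bookkeeping in the two-sided estimate: one must be careful that the ``error'' terms $o(h)$ are genuinely uniform, which requires the uniform $H^{n/2}$-bounds on $u_\lambda$ and uniform bounds on $\alpha_\lambda$ over compact $\lambda$-intervals (these follow from the Adams--Fontana estimates exactly as in Theorem~\ref{thm: minimizer}, but need to be stated), and that the continuity trick genuinely produces a zero $t_h$ with the claimed asymptotics — this is where the sign of $\xi'(0)$, equivalently the positivity $\alpha_{\lambda_0}>0$ and $\beta(\lambda_0)>0$, is essential. A secondary subtlety is that $\beta(\lambda_0+h)$ is only known to be continuous, not a priori differentiable, in $h$, so the lower bound must be extracted purely from the inequality $\beta(\lambda_0)\le(1-s_h)^2\beta(\lambda_0+h)$ without differentiating the right-hand side; dividing by $h$ and passing to the limit using the already-established continuity of $\beta$ and boundedness of $\alpha$ is what makes this work.
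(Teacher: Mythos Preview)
Your approach is essentially the paper's: deform the minimizer multiplicatively, $(1+\delta)u_\lambda$, and use the constraint to solve for $\delta$ in terms of $\mu-\lambda$. A few points where the paper is cleaner and where your write-up has soft spots:

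\textbf{(i)} In the Lipschitz paragraph you invoke $\zeta'(1)$, but the zero $t_*$ you need is near $t=0$, so the relevant derivative is $\zeta'(0)=-n\int_M f_{\lambda_1}u_{\lambda_2}e^{nu_{\lambda_2}}\ud\mu_g$; this equals $-n\alpha_{\lambda_2}^{-1}\beta(\lambda_2)+O(\lambda_2-\lambda_1)$, which is what drives $|t_*|=O(|\lambda_2-\lambda_1|)$.

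\textbf{(ii)} Your ``lower bound'' step based on $u_{\lambda_0+h}$ produces an inequality involving $\alpha_{\lambda_0+h}$, and you then appeal to $\alpha_{\lambda_0+h}\to\alpha_{\lambda_0}$. That continuity is not proved (and need not hold, since minimizers need not be unique). The paper sidesteps this entirely: the single inequality
\[
\beta(\mu)\le\beta(\lambda)-\tfrac{2\alpha_\lambda}{n}(\mu-\lambda)+O((\mu-\lambda)^2),
\]
obtained from $(1+\delta)u_\lambda\in\mathcal M(\mu)$, is valid for $\mu$ on \emph{both} sides of $\lambda$ (the sign of $\delta$ is unrestricted). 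Dividing by $\mu-\lambda$ gives $\tfrac{d\beta}{d\lambda}(\lambda_+)\le-\tfrac{2\alpha_\lambda}{n}$ and $\tfrac{d\beta}{d\lambda}(\lambda_-)\ge-\tfrac{2\alpha_\lambda}{n}$, which pin the derivative at every point of differentiability with no reference to $\alpha_\mu$. Your ``symmetric treatment of $h<0$'' is exactly this, so your lower-bound detour through $u_{\lambda_0+h}$ is unnecessary.

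\textbf{(iii)} The Lipschitz constant depends on a uniform bound for $\alpha_\lambda$ on $[a,b]$, which you cite but do not derive. The paper does this explicitly: multiply \eqref{equ:u_lambda} by $f_\lambda$, integrate, and bound $\alpha_\lambda=\dfrac{\int_M(u_\lambda-\bar u_\lambda)P_gf_0\,\ud\mu_g}{\int_M f_\lambda^2 e^{nu_\lambda}\,\ud\mu_g}$ using H\"older, Adams--Fontana on $e^{-nu_\lambda}$, and the uniform bound on $\beta(\lambda)$ over $[a,b]$. You should include this, as it is the only place nontrivial analysis enters the Lipschitz claim.
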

\begin{proof}
	Multiply $f_\lambda$ to both sides of  the equation \eqref{equ:u_lambda} and integrate it over $M$ to get
	$$\alpha_\lambda=\frac{\int_M u_\lambda P_g f_\lambda\ud\mu_g}{\int_M f_\lambda^2e^{nu_\lambda}\ud\mu_g}=\frac{\int_M(u_\lambda-\bar u_\lambda)P_gf_0\ud\mu_g}{\int_M f_\lambda^2e^{nu_\lambda}\ud\mu_{g}}.$$
	Using H\"older's inequality, one has 
	$$0<\|f_\lambda\|^2_{L^1}\leq\left(\int_Mf_\lambda^2e^{nu_\lambda}\ud\mu_{g_b}\right)\int_Me^{-nu_\lambda}\ud\mu_{g_b}.$$
	Then
	$$\alpha_\lambda\leq C\frac{\|u_\lambda-\bar u_\lambda\|_{L^2}\|f_0\|_{H^n}}{\|f_\lambda\|^2_{L^1}}\int_Me^{-nu_\lambda}\ud\mu_g.$$
	With help of Adams-Fontana's inequality and $u_\lambda\in \mathcal{M}^*(\lambda)$, we have
	$$\int_M e^{-nu_\lambda}\ud\mu_g=\int_Me^{-nu_\lambda}\ud\mu_g\int_M e^{nu_\lambda}\ud\mu_g\leq e^{C_1\int_Mu_\lambda P_g u_\lambda \ud\mu_g+C}$$
	where $C_1$  and $C$ are both positive constants just depending  on $(M,g)$.
	The positivity of $P_g$ shows that
	$$\|u_\lambda-\bar u_\lambda\|^2_{L^2(M)}\leq C\int_M u_\lambda P_g u_\lambda\ud\mu_g.$$
	We could combine these estimates to get
	$$\alpha_\lambda\leq C\frac{\| f_0\|_{H^n}}{\|f_\lambda\|^2_{L^1}}\sqrt{\beta(\lambda)}e^{C_1\beta(\lambda)}.$$
	If $\lambda\in \left[a,b\right]\subset(0,-\bar f_0)$, we just need construct a function $u$ such $\int_M f_ae^{2u}\ud\mu_{g_b}>0$ to control the upper bound  $\beta_\lambda$ with help of the trick in Lemma \ref{trick lemma} since 
	$$\int_M f_\lambda e^{nu}\ud\mu_g\geq\int_M f_ae^{nu}\ud\mu_g.$$
	The construction of such function is easy and we omit the details. Thus $\alpha_\lambda$ and $\beta_\lambda$ are both uniformly bounded on compact subset of $(0,-\bar f_0)$.
	
	Consider  $\delta$ close to $0$ and $\mu$ close to $\lambda$ with $\mu,\lambda\in[a, b]$
	\begin{align*}
		\int_Mf_\mu e^{n(1+\delta)u_\lambda}\ud\mu_g=&\int_M(f_\lambda+(\mu-\lambda)) e^{n(1+\delta)u_\lambda}\ud\mu_g  \\
		=&\int_Mf_\lambda e^{nu_\lambda}(e^{n\delta u_\lambda}-1)\ud\mu_g\\
		&+(\mu-\lambda)\int_Me^{nu_\lambda}(e^{2\delta u_\lambda}-1)\ud\mu_g+\mu-\lambda\\
		=&n\delta\int_Mf_\lambda e^{nu_\lambda}u_\lambda\ud\mu_g+O(\delta^2)+(\mu-\lambda)O(\delta)+\mu-\lambda\\
		=&\frac{n\beta(\lambda)}{\alpha_\lambda}\delta+O(\delta^2)+(\mu-\lambda)O(\delta)+\mu-\lambda
	\end{align*}
	where we have used $u_\lambda\in \mathcal{M}^*(\lambda)$ and the equation \eqref{equ:u_lambda}. Hence we can choose $$\delta=-\frac{\alpha_\lambda}{n\beta(\lambda)}(\mu-\lambda)+O((\mu-\lambda)^2)$$ such that
	$$	\int_Mf_\mu e^{n(1+\delta)u_\lambda}\ud\mu_{g_b}=0.$$
	Then
	$$\beta(\mu)\leq(1+\delta)^2\beta(\lambda)=\beta(\lambda)-\frac{2\alpha_\lambda}{n}(\mu-\lambda)+O((\mu-\lambda)^2).$$
	Similarly, we have
	$$\beta(\lambda)\leq\beta(\mu)-\frac{2\alpha_\mu}{n}(\lambda-\mu)+O((\mu-\lambda)^2).$$ Hence 
	\begin{equation}\label{beta_lambda-beta_mu}
		-\frac{2\alpha_\mu}{n}(\mu-\lambda)+O((\mu-\lambda)^2)\leq\beta(\mu)-\beta(\lambda)\leq-\frac{2\alpha_\lambda}{n}(\mu-\lambda)+O((\mu-\lambda)^2).
	\end{equation}
	Due to the estimate \eqref{beta_lambda-beta_mu}, we show that $\beta(\lambda)$ is Lipschitz continuos on compact subset of $(0,-\bar f_0)$. Moreover, we have
	$$\frac{\ud\beta}{\ud\lambda}(\lambda_+)\leq -\frac{2\alpha_\lambda}{n},
	\quad
	\frac{\ud\beta}{\ud\lambda}(\lambda_-)\geq -\frac{2\alpha_\lambda}{n}.$$
	Since $\beta(\lambda)$ is differentiable almost everywhere, one has
	$$\beta'(\lambda)=-\frac{2\alpha_\lambda}{n}\quad a.e.$$
\end{proof}

\begin{lemma}\label{lem: beta(lambda) upper boud}
	There hold  $$ \lim_{\lambda\downarrow 0}\inf \frac{\beta(\lambda)}{\log(1/\lambda)}\geq \frac{(n-1)!|\mathbb{S}^n|}{n},\quad \lim_{\lambda\downarrow 0}\sup\frac{\beta(\lambda)}{\log(1/\lambda)}\leq (n-1)!|\mathbb{S}^n|.$$
	Moreover, if there exists a maximum point of $f_0(x)$ is flat up to $n-1$ order, one has
	$$\lim_{\lambda\downarrow 0}\sup\frac{\beta(\lambda)}{\log(1/\lambda)}\leq \frac{2}{n}(n-1)!|\mathbb{S}^n|,\quad \lim_{\lambda\downarrow 0}\inf \lambda\alpha_\lambda\leq (n-1)!|\mathbb{S}^n|.$$
	
\end{lemma}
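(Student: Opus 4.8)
The plan is to obtain the two-sided asymptotic estimate by constructing explicit test functions for the upper bounds and using a blow-up / compactness argument together with the sharp Adams--Fontana (Moser--Trudinger) inequality for the lower bound. For the lower bound $\liminf_{\lambda\downarrow 0}\beta(\lambda)/\log(1/\lambda)\ge \tfrac{(n-1)!|\mathbb{S}^n|}{n}$: since $u_\lambda\in\mathcal{M}^*(\lambda)$ and $\int_M f_\lambda e^{nu_\lambda}=0$, splitting $f_\lambda=f_0+\lambda$ gives $\int_M(-f_0)e^{nu_\lambda}=\lambda\int_M e^{nu_\lambda}=\lambda$. Thus the mass of $e^{nu_\lambda}$ concentrates where $f_0$ is near zero, i.e. near a maximum point of $f_0$, and $\int_M(-f_0)e^{nu_\lambda}=\lambda\to 0$. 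Feeding this into the sharp form of the Adams--Fontana inequality --- in the format $\log\int_M e^{n(u-\bar u)}\,d\mu_g\le \tfrac{n}{(n-1)!|\mathbb{S}^n|}\,\tfrac{n}{2}E(u)+C$ (the sharp constant being exactly what makes one spherical bubble critical) --- and combining with the bound on $\bar u_\lambda$ in terms of $\log\int(-f_0)e^{nu_\lambda}$, one extracts $E(u_\lambda)=\beta(\lambda)\ge \tfrac{(n-1)!|\mathbb{S}^n|}{n}\log(1/\lambda)(1+o(1))$. I would carry this out by estimating $\bar u_\lambda$ from below: Jensen applied to $\int(-f_0)e^{nu_\lambda}=\lambda$ localized near the maximum set forces $\bar u_\lambda\to-\infty$ at a rate controlled by $\log\lambda$, and then the sharp inequality converts this into the stated lower bound for $\beta$.

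For the first (crude) upper bound $\limsup\beta(\lambda)/\log(1/\lambda)\le (n-1)!|\mathbb{S}^n|$: I would build a test function $u$ modeled on a single standard bubble $\log\big(\tfrac{\varepsilon}{\varepsilon^2+|z|^2}\big)$ transplanted via $\exp_{p_0}$ at a maximum point $p_0$ of $f_0$, truncated at a fixed scale and renormalized so that $\int_M e^{nu}=1$. One computes $E(u)=\tfrac{2}{n}(n-1)!|\mathbb{S}^n|\log(1/\varepsilon)+O(1)$ (this is the energy of one spherical bubble, from $\int_{\mathbb{R}^n} (-\Delta)^{n/2}U\cdot U$ for the standard bubble $U$). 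Then one must tune $\varepsilon=\varepsilon(\lambda)$ so that the constraint $\int_M f_\lambda e^{nu}=0$ is attainable after adding a constant and possibly a small deformation; since $\int_M(-f_0)e^{nu}\sim \varepsilon^{?}$ times a power dictated by the vanishing order of $f_0$ at $p_0$, while $\int_M\lambda e^{nu}=\lambda$, matching these gives $\log(1/\varepsilon)\sim \tfrac{1}{2}\log(1/\lambda)$ when the maximum point is merely quadratic (so $-f_0\sim|z|^2$), yielding $\beta(\lambda)\le (n-1)!|\mathbb{S}^n|\log(1/\lambda)(1+o(1))$, which after using the continuity trick from Lemma \ref{trick lemma} (to legitimately adjust onto $\mathcal{M}^*(\lambda)$) is the claimed bound. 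For the \emph{improved} bound under the flatness hypothesis: if some maximum point is flat up to order $n-1$, then $-f_0(z)=O(|z|^n)$ near that point, so $\int_M(-f_0)e^{nu}\lesssim \int_{|z|\lesssim 1}|z|^n \big(\tfrac{\varepsilon}{\varepsilon^2+|z|^2}\big)^n\,dz$, whose logarithmic contribution now matches $\lambda$ at the rate $\log(1/\varepsilon)\sim\log(1/\lambda)$ --- intuitively, the flat function ``does not penalize'' the bubble, so one needs a much smaller scale, doubling the available energy budget relative to $\log(1/\lambda)$ and giving $\limsup\beta(\lambda)/\log(1/\lambda)\le \tfrac{2}{n}(n-1)!|\mathbb{S}^n|$. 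Finally, $\liminf\lambda\alpha_\lambda\le (n-1)!|\mathbb{S}^n|$ follows from this improved $\beta$-bound together with $\beta'(\lambda)=-\tfrac{2\alpha_\lambda}{n}$ a.e. (Lemma \ref{Lemma beta_lambda derivative}) and monotonicity (Lemma \ref{trick lemma}): integrating $-\tfrac{2}{n}\alpha_\lambda=\beta'(\lambda)$ over $[\lambda,2\lambda]$ and using $\beta(\lambda)-\beta(2\lambda)\le \tfrac{2}{n}(n-1)!|\mathbb{S}^n|\log 2\,(1+o(1))$ forces $\lambda\alpha_\lambda$ to be $\le (n-1)!|\mathbb{S}^n|$ along a sequence.

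The main obstacle I anticipate is making the test-function construction rigorous in the curved manifold setting: the GJMS operator $P_g$ differs from $(-\Delta)^{n/2}$ by lower-order terms, so transplanting the Euclidean bubble via $\exp_{p_0}$ introduces error terms in $E(u)$ that must be shown to be $O(1)$ (not $O(\log(1/\varepsilon))$), which requires care with the conformal normal coordinates and the precise decay of the bubble and its derivatives up to order $n/2$. Equally delicate is verifying that the constraint set $\mathcal{M}^*(\lambda)$ is reachable from the model bubble: one needs $\int_M f_\lambda e^{nu}$ to change sign as a parameter (e.g. the center or a scaling) varies, so that the continuity trick of Lemma \ref{trick lemma} applies; this is where the precise interplay between the vanishing order $l$ of $f_0$ at the maximum point, the bubble concentration scale $\varepsilon$, and $\lambda$ must be pinned down, and the case distinction $l\ge n$ versus the generic $l=2$ is exactly what produces the factor $\tfrac{2}{n}$ improvement.
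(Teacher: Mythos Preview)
Your upper-bound strategy---transplanting the standard bubble $\log\frac{2s}{1+s^2|z|^2}$ at a maximum point $p_0$, computing its $P_g$-energy as $(2(n-1)!|\mathbb{S}^n|+o(1))\log s$, estimating $\int_M f_\lambda e^{n\varphi}$ according to the vanishing order of $f_0$ at $p_0$, choosing $s$ so that this integral is positive, and then invoking the continuity trick of Lemma~\ref{trick lemma}---is exactly what the paper does. (A small numerical slip: the bubble energy is $\sim 2(n-1)!|\mathbb{S}^n|\log(1/\varepsilon)$, not $\tfrac{2}{n}(n-1)!|\mathbb{S}^n|\log(1/\varepsilon)$; with the correct constant your scaling relations $s^{-2}\sim\lambda$ in the quadratic case and $s^{-n+\epsilon}\sim\lambda$ in the flat case give precisely the stated bounds.) Your derivation of $\liminf\lambda\alpha_\lambda\le(n-1)!|\mathbb{S}^n|$ from $\beta'(\lambda)=-\tfrac{2}{n}\alpha_\lambda$ and the improved $\beta$-bound is also equivalent to the paper's contradiction argument.

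The gap is in your lower bound. You propose to apply Jensen to $\int_M(-f_0)e^{nu_\lambda}=\lambda$ in order to force $\bar u_\lambda\to-\infty$ at rate $\log\lambda$, and then feed this into Adams--Fontana. But Jensen with the weight $(-f_0)/\|f_0\|_{L^1}$ only yields $\int_M(-f_0)u_\lambda\,d\mu_g\le C\log\lambda$, which is a \emph{weighted} average of $u_\lambda$, not $\bar u_\lambda$; and there is no a priori control relating the two without already knowing something like $\beta(\lambda)$. The paper sidesteps this entirely with a H\"older trick: from
\[
\|f_0\|_{L^1}^2 \le \Big(\int_M|f_0|e^{-nu_\lambda}\Big)\Big(\int_M|f_0|e^{nu_\lambda}\Big)=\lambda\int_M|f_0|e^{-nu_\lambda}\le C\lambda\int_M e^{-nu_\lambda},
\]
one multiplies by $\int_M e^{nu_\lambda}=1$, factors out $e^{\pm n\bar u_\lambda}$, and applies Adams--Fontana \emph{twice} (once to $u_\lambda-\bar u_\lambda$ and once to $-(u_\lambda-\bar u_\lambda)$) to obtain $\|f_0\|_{L^1}^2\le C\lambda\exp\big(\tfrac{n}{(n-1)!|\mathbb{S}^n|}\beta(\lambda)\big)$ directly, with no intermediate estimate on $\bar u_\lambda$ needed. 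This is the step you are missing.
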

\begin{proof}

	Firstly,  following the argument of Lemma 4.1 in \cite{NZ19}, H\"older's inequality together with  Adams-Fontana inequality(See Proposition 2.2 in \cite{Nid}) shows that
	\begin{align*}
		0<\|f_0\|^2_{L^1(M)}\leq&\int_M|f_0|e^{-nu_\lambda}\ud\mu_g\int_M |f_0|e^{nu_\lambda}\ud\mu_g\\
		=&\int_M|f_0|e^{-nu_\lambda}\ud\mu_g\int_M\lambda e^{nu_\lambda}\ud\mu_g\\
		\leq &C\lambda\int_Me^{-nu_\lambda}\ud\mu_g\int_Me^{nu_\lambda}\ud\mu_g\\
		=&C\lambda\int_Me^{n(-u_\lambda+\bar u_\lambda)}\ud\mu_g\int_Me^{n(u_\lambda-\bar u_\lambda)}\ud\mu_g\\
		\leq &C\lambda \exp\left(\frac{n}{(n-1)!|\mathbb{S}^n|}\beta(\lambda)\right).
	\end{align*}
	Then
	\begin{equation}\label{beta(lambda)+loglambda geq C}
		\beta(\lambda)-\frac{(n-1)!|\mathbb{S}^n|}{n}\log\frac{1}{\lambda}\geq C.
	\end{equation}
Immediately, there holds
$$\lim_{\lambda\downarrow 0}\inf \frac{\beta(\lambda)}{\log1/\lambda}\geq \frac{(n-1)!|\mathbb{S}^n|}{n}.$$
Of course, we also have
	\begin{equation}\label{beta_lambda tend to infty}
	\beta(\lambda)\to \infty,\quad \mathrm{as}\quad \lambda\to 0.
\end{equation}
	
	As for the upper bound for $\beta(\lambda)$, consider a test function introduced  in \cite{DM}(See Section 4 in \cite{DM})). For $\delta>0$ small to be chosen later, consider a non-decreasing smooth cut-off function $\chi_\delta(t)$ such that $\chi_\delta(t)=t$ for $t\in [0,\delta]$ and $\chi_\delta(t)=2\delta$ for any $t\geq 2\delta.$ For $s>0$ to be chosen later, define the test function $\varphi_{s,\delta}$ as
	\begin{equation}\label{def:varphi}
		\varphi_{s,\delta}(x)=\log \frac{2s}{1+s^2\chi_\delta(d_g(x,x_0))^2}
	\end{equation}
where $x_0$ is a maximum point of $f_0$ and $d_g(x,y)$ is the distance function on $(M,g)$.
 Letting  $s\to\infty$, Lemma 4.2 in \cite{DM}  and Lemma 4.5 in \cite{Nid}  show that
 \begin{equation}\label{energy of varphi_lambda}
 	\int_M\varphi_{s,\delta}P_g\varphi_{s,\delta}\ud\mu_g\leq (2(n-1)!|\mathbb{S}^n|+o_\delta(1))\log s+ C_\delta.
 \end{equation}
 Due to $f_0(x_0)=\max_M f_0=0$,  we could find $c_1>0$ and small $\delta>0$  such that
 $$f(x)\geq -c_1d_g(x,x_0)^2$$
 for any $x\in B_{2\delta}(x_0)$.  
Thus for such  small $\delta>0$ fixed
\begin{align*}
	&\int_Mf_\lambda e^{n\varphi_{s,\delta}}\ud\mu_g\\
	=&\int_{B_{2\delta}(x_0)}(f_0+\lambda)\left(\frac{2s}{1+s^2\chi_\delta(d_g(x,x_0))^2}\right)^n\ud\mu_g+\int_{M\backslash B_{2\delta}(x_0)}(f_0+\lambda))\left(\frac{2s}{1+4s^2\delta^2}\right)^n\ud\mu_g\\
	\geq &\int_{B_{\delta}(x_0)}\lambda\left(\frac{2s}{1+s^2d_g(x,x_0)^2}\right)^n\ud\mu_g-c_1\int_{B_{\delta}(x_0)}d_g(x,x_0)^2\left(\frac{2s}{1+s^2d_g(x,x_0)^2}\right)^n\ud\mu_g\\
	&-c_1\int_{B_{2\delta}(x_0)\backslash B_\delta(x_0)}d_g(x,x_0)^2\left(\frac{2s}{1+s^2\delta^2}\right)^n\ud\mu_g+O(s^{-n})\\
	\geq  & C_1\lambda\int_{|x|\leq \delta}\frac{s^n}{(a_1+s^2|x|^2)^n}\ud x-C_2\int_{|x|\leq \delta}\frac{s^n|x|^2}{(a_2+s^2|x|^2)^n}\ud x+O(s^{-n})
\end{align*}
where $C_1, C_2, a_1$ and $a_2$ are all  positive constants independent  of $s$ by considering the exponential map in the  small neighborhood.
Now  using polar coordinates, we have
\begin{align*}
	&\int_Mf_\lambda e^{n\varphi_{s,\delta}}\ud\mu_g\\
	\geq& C_3\lambda\int_0^{s\delta}\frac{t^{n-1}}{(a_1+t^2)^n}\ud t-C_4s^{-2}\int_0^{s\delta}\frac{t^{n+1}}{(a_2+t^2)^n}\ud t+ O(s^{-n})\\
=&C_3\lambda\int_0^{\infty}\frac{t^{n-1}}{(a_1+t^2)^n}\ud t-C_4s^{-2}\int_0^{\infty}\frac{t^{n+1}}{(a_2+t^2)^n}\ud t+ O(s^{-n}).
\end{align*}
We choose 
\begin{equation}\label{s choice}
	s^{-2}=c\lambda
\end{equation} where $c$ is a positive constant such that
$$\int_M(f_0+\lambda )e^{n\varphi_{s,\delta}}\ud\mu_g\geq Cs^{-2}+O(s^{-n}).$$
Hence if $\lambda$ small enough, we have
$$\int_M(f_0+\lambda )e^{n\varphi_{s,\delta}}\ud\mu_g>0.$$
Due to $\bar f_\lambda<0$, the same trick in Lemma \ref{trick lemma} implies that there exists $t_1\in (0,1)$ such that $t_1\varphi_{s,\delta}\in \mathcal{M}(\lambda)$.
Thus with help of \eqref{s choice} and \eqref{energy of varphi_lambda}, there holds 
$$\beta(\lambda)\leq E(t_1\varphi_{s,\delta})\leq ((n-1)!|\mathbb{S}^n|+o_{\delta}(1))\log\frac{1}{\lambda}+C(\delta).$$
Immediately, one has 
$$\lim_{\lambda\downarrow 0}\sup\frac{\beta(\lambda)}{\log1/\lambda}\leq (n-1)!|\mathbb{S}^n|+o_\delta(1).$$
Then by letting   $\delta\to 0$, we obtain  the desired result.

Now, if a maximum point $x_0$ of $f_0$ is flat up to $n-1$ order, we have
$$f(x)\geq -Cd_g(x,x_0)^n$$
in the neighborhood $B_{2\delta}(x_0).$ Similarly, a direct computation yields that 
\begin{equation}\label{n-2 flatness}
\int_M(f_0+\lambda)e^{n\varphi_{s,\delta}}\ud\mu_g\\
\geq C\lambda\int_0^{s\delta}\frac{t^{n-1}}{(a_1+t^2)^n}\ud t-Cs^{-n}\int_0^{s\delta}\frac{t^{2n-1}}{(a_2+t^2)^n}\ud t+ O(s^{-n}).
\end{equation}
Now for any small $\epsilon>0$, we choose
$$s^{-n+\epsilon}=\lambda$$
such that
$$\int_M(f_0+\lambda)e^{n\varphi_{s,\delta}}\ud\mu_g\geq Cs^{-n+\epsilon}+O(s^{-n}\log s).$$
Hence if $\lambda$ small enough, we have
$$\int_M(f_0+\lambda )e^{n\varphi_{s,\delta}}\ud\mu_g>0.$$
Samely as before, 
one concludes that 
$$
	\lim_{\lambda\downarrow 0}\sup\frac{\beta(\lambda)}{\log1/\lambda}\leq \frac{2}{n-\epsilon}(n-1)!|\mathbb{S}^n|+o_\delta(1).
$$
Letting  $\delta\to 0$ and then $\epsilon\to 0$, one has 
\begin{equation}\label{equ:sup beta/log lambda}
	\lim_{\lambda\downarrow 0}\sup\frac{\beta(\lambda)}{\log1/\lambda}\leq \frac{2}{n}(n-1)!|\mathbb{S}^n|.
\end{equation}
Meanwhile,  with help of Lemma \ref{Lemma beta_lambda derivative} and the above estimate  \eqref{equ:sup beta/log lambda}, we have
\begin{equation}\label{lambda alpha_lambda}
	\lim_{\lambda\downarrow 0}\inf \lambda\alpha_\lambda\leq \frac{n}{2}\lim_{\lambda\downarrow 0}\inf\lambda|\frac{\ud \beta_{\lambda}}{\ud \lambda}| \leq \frac{n}{2}\lim_{\lambda\downarrow 0}\sup\frac{\beta(\lambda)}{\log1/\lambda}
	\leq (n-1)!|\mathbb{S}^n|.
\end{equation}
Indeed, suppose $\lambda_1>0$ and $c_0>\lim_{\lambda\downarrow 0}\sup\frac{\beta(\lambda)}{\log1/\lambda}$ for almost $0<\lambda<\lambda_1$ such that $|\beta'_\lambda|\geq\frac{c_0}{\lambda}$ then for any sufficiently small $\lambda$ we obtain
$$\beta_\lambda-\beta_{\lambda_1}\geq\int^{\lambda_1}_\lambda|\beta_s'|\ud s\geq c_0\int^{\lambda_1}_\lambda\frac{\ud s}{s}\geq c_0\log(1/\lambda)+C$$
which contradicts to our choice $c_0$ by letting $\lambda\downarrow 0$. Finally, we complete our proof.
\end{proof}

\section{Blow-up analysis}\label{section 3}
The following lemma generalizes a  result  of Malchiodi(See Porposition 3.1 in \cite{Ma}).
\begin{lemma}\label{lem:h_k^+ geq (n-1)!/2 general case}
	Let $(M,g)$ be a compact closed manifold with even dimension $n\geq 4$ and  $\mathrm{ker} P_g=\{constants\}$. 
	Given a sequence  $\{u_k\}$ satisfying  $P_g u_k=h_k$ with $h_k\in L^1(M)$ and  $\int_Mh_k^+\ud\mu_g\leq C$ where $C$ is a constant independent of $k$.  Set $\bar u_k=\frac{1}{vol(M,g)}\int_M u_k\ud\mu_g$.
	Then one of the following is true: 
	\begin{enumerate}[(a)]
		\item 
		There exists a constant $q_0>n$ such that
		$$\int_Me^{q_0(u_k-\bar u_k)}\ud\mu_g\leq C.$$
		Moreover, if $	h^+_k\leq Ce^{nu_k}$, there holds
		$$\|(u_k-\bar u_k)^+\|_{L^\infty(M)}\leq C.$$
		\item There exist finite points $p_i$ with $1\leq i\leq i_0$ such that for any $s>0$
		\begin{equation}\label{B_s h^+}
			\lim_{k\to\infty}\inf\int_{B_s(p_i)}h^+_k\ud\mu_g\geq \frac{1}{2}(n-1)!|\mathbb{S}^n|
		\end{equation}
	Moreover, if $	h^+_k\leq Ce^{nu_k}$,  on any $B_r(x) \subset\subset M\backslash\{p_1,\cdots,  p_{i_0}\}$, there holds
	$$\|(u_k-\bar u_k)^+\|_{L^\infty(B_r(x))}\leq C.$$
	\end{enumerate}
\end{lemma}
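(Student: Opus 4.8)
The plan is to follow the classical Brezis--Merle / Malchiodi dichotomy, adapted to the polyharmonic operator $(-\Delta)^{n/2}$ via the Green representation on $M$. First I would decompose $u_k - \bar u_k = v_k + w_k$, where $v_k$ solves $P_g v_k = h_k$ with $\int_M v_k\,\ud\mu_g = 0$ via the Green function $G(x,y)$ of $P_g$ (which exists and has the standard logarithmic leading term $\tfrac{2}{(n-1)!|\mathbb S^n|}\log\tfrac1{|x-y|}$ since $P_g$ is positive with kernel the constants), and $w_k$ is the harmonic-type remainder that is smooth and uniformly bounded once we control $\|h_k\|_{L^1}$; here one must first observe that $\int_M h_k\,\ud\mu_g = 0$ (integrate $P_g u_k = h_k$ against $1$), so together with $\int_M h_k^+\le C$ one gets $\|h_k\|_{L^1(M)}\le 2C$. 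For $v_k$ I would split $h_k = h_k^+ - h_k^-$ and treat each piece; the key input is a higher-order Brezis--Merle estimate: if $\mu$ is a nonnegative measure on $M$ with $\|\mu\|\le \alpha < (n-1)!|\mathbb S^n|$, then $\int_M \exp\!\big(q\,|\!\int_M G(\cdot,y)\,\ud\mu(y)|\big)\,\ud\mu_g \le C$ for some $q>n$. This is exactly the polyharmonic Moser--Trudinger/Adams-type borderline estimate; I would cite or reproduce it from the Adams--Fontana circle of inequalities already invoked in the paper.

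Next comes the concentration-compactness alternative for the sequence of measures $\mu_k := h_k^+\,\ud\mu_g$, which has uniformly bounded mass. Passing to a subsequence, $\mu_k \rightharpoonup \mu$ weakly-$*$. Define the singular set $\Sigma = \{p \in M : \mu(\{p\}) \ge \tfrac12 (n-1)!|\mathbb S^n|\}$; by bounded total mass this is a finite set $\{p_1,\dots,p_{i_0}\}$. If $\Sigma = \emptyset$, then every point $x\in M$ has a ball $B_{s(x)}(x)$ with $\mu(B_{s(x)}(x)) < \tfrac12(n-1)!|\mathbb S^n| < (n-1)!|\mathbb S^n|$, hence for $k$ large $\mu_k(B_{s(x)}(x))$ stays below a threshold $\alpha < (n-1)!|\mathbb S^n|$; a covering argument plus the Brezis--Merle estimate applied locally (cutting off $G$ and absorbing the smooth far-field and the $h_k^-$ contribution, which only helps since it enters $v_k$ with a favorable sign after taking positive parts, or is handled by the same local estimate on $h_k^-$ near points where its mass is small — but note $h_k^-$ has no a priori mass bound, so I must be careful here and instead estimate $e^{q(u_k-\bar u_k)^+}$, i.e.\ only the positive part, for which $h_k^-$ contributes negatively to $v_k$ and can be dropped) yields $\int_M e^{q_0(u_k-\bar u_k)}\,\ud\mu_g \le C$ for some $q_0 > n$: this is case (a). If $\Sigma \ne \emptyset$, then for each $p_i$ and each $s>0$, $\mu(B_s(p_i)) \ge \mu(\{p_i\}) \ge \tfrac12(n-1)!|\mathbb S^n|$, and $\liminf_k \mu_k(B_s(p_i)) \ge \mu(\overline{B_{s/2}(p_i)}) \ge \tfrac12(n-1)!|\mathbb S^n|$ by weak-$*$ lower semicontinuity on open sets, giving \eqref{B_s h^+}: this is case (b).

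For the ``Moreover'' parts, assume additionally $h_k^+ \le C e^{n u_k}$. In case (a): from $\int_M e^{q_0(u_k - \bar u_k)} \le C$ with $q_0 > n$ and $|\bar u_k|$ under control (the argument is as in the proof of Theorem \ref{thm: minimizer}: Jensen plus Adams--Fontana pin down $\bar u_k$ once $E(u_k)$ or here $\|h_k\|_{L^1}$ is bounded — actually one needs a normalization; I would phrase the $L^\infty$ bound as being for $u_k - \bar u_k$ directly), one gets $h_k^+ \le C e^{n u_k} = C e^{n\bar u_k} e^{n(u_k - \bar u_k)} \in L^{q_0/n}$ with $q_0/n > 1$, so $h_k \le h_k^+ \in L^p$ for some $p>1$ uniformly; elliptic $L^p$ estimates for $P_g$ (a positive elliptic operator of order $n$ with constant kernel, so invertible on mean-zero functions) give $\|u_k - \bar u_k\|_{W^{n,p}} \le C(\|h_k^+\|_{L^p} + \|h_k^-\|_{L^1})$ — again the sign: since we only want the $L^\infty$ bound on the positive part, I would instead solve $P_g z_k = h_k^+$, note $z_k \ge$ (something) $\gtrsim u_k - \bar u_k$ up to bounded error via the maximum principle / Green function positivity, bootstrap $z_k$ to $L^\infty$, and conclude $\|(u_k-\bar u_k)^+\|_\infty \le C$. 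In case (b): run the identical argument on a fixed ball $B_r(x) \subset\subset M\setminus\Sigma$, using that $\mu_k(B_{r'}(x))$ stays below $(n-1)!|\mathbb S^n|$ for a slightly larger $r'$, to get the local exponential integrability with exponent $>n$ and then the local $L^\infty$ bound on $(u_k - \bar u_k)^+$.

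The main obstacle I anticipate is the handling of the negative part $h_k^-$, which carries no mass bound: the clean Brezis--Merle machinery wants $\|h_k\|_{L^1}$ controlled, and here only $\int h_k = 0$ and $\int h_k^+ \le C$ are available, forcing $\int h_k^- \le C$ as well — so in fact the mass \emph{is} controlled and this worry dissolves; the real subtlety is then just that $h_k^-$ can concentrate too, but since it enters $u_k$ with the favorable sign (it only pushes $u_k$ down), all upper bounds on $(u_k - \bar u_k)^+$ go through by monotonicity/comparison, and $h_k^-$ never obstructs. The second genuinely delicate point is getting the sharp threshold constant $\tfrac12(n-1)!|\mathbb S^n|$ (rather than a non-optimal one) in \eqref{B_s h^+}, which requires the sharp polyharmonic Adams inequality with the correct constant $(n-1)!|\mathbb S^n|$ — exactly the constant already appearing in Lemma \ref{lem: beta(lambda) upper boud} — and a careful local cutoff argument à la Malchiodi's Proposition 3.1 to avoid losing a factor; I would model this step line-by-line on \cite{Ma} and \cite{Nid}, replacing the $4$-dimensional Paneitz Green function asymptotics by the general GJMS ones.
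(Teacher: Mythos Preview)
Your proposal is essentially the paper's proof: Green representation of $u_k-\bar u_k$, drop the $h_k^-$ contribution via positivity of $G$ (which on compact $M$ is arranged by shifting $G$ by a constant, since $P_g$ annihilates constants), apply the Jensen/Brezis--Merle trick to get $e^{q_0(u_k-\bar u_k)}\in L^1$ locally whenever $\int_{B_{3r}} h_k^+ < \tfrac12(n-1)!|\mathbb S^n|$, and then run the finite-cover dichotomy. Two small clarifications are in order. First, your split $u_k-\bar u_k=v_k+w_k$ is superfluous: since $\ker P_g=\{\text{constants}\}$ forces $\int_M h_k=0$, the Green formula already gives $u_k-\bar u_k=\int_M G(\cdot,y)h_k(y)\,\ud\mu_g$ exactly, so $w_k\equiv 0$. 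Second, note that the Brezis--Merle threshold for obtaining an exponent $q_0>n$ is $\tfrac12(n-1)!|\mathbb S^n|$, not $(n-1)!|\mathbb S^n|$, because of the factor $2$ in the leading term of $G$; this is why your $\Sigma$ must carry the $\tfrac12$, and your stated estimate with threshold $(n-1)!|\mathbb S^n|$ should be corrected accordingly.

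For the ``Moreover'' part the paper takes a more direct route than your bootstrap: it does not invoke elliptic $L^p$ theory or a maximum principle for $P_g$ (which in general \emph{fails} for higher-order operators, so that part of your sketch should be dropped), but simply reuses the pointwise Green bound $u_k(x)-\bar u_k\le \int_M G(x,y)h_k^+(y)\,\ud\mu_g$, inserts $h_k^+\le Ce^{nu_k}$, splits near/far field, and applies H\"older against the already-established local $L^{q_0/n}$ control on $e^{n(u_k-\bar u_k)}$ together with $G\in L^p_{\mathrm{loc}}$ for suitable $p$. Your comparison idea via $P_g z_k=h_k^+$ and Green positivity is in spirit the same thing, but writing it directly as a Green-function estimate avoids the detour and the problematic appeal to a comparison principle.
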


\begin{proof}
	
	Due to $\mathrm{ker}P_g=\{constants\}$, there exists a  Green's function (See \cite{CY95}, \cite{Nid}, \cite{Ma}) satisfying
	$$P_gG(x,y)=\delta_y(x)-1$$
	with 
	$$|G(x,y)-\frac{2}{(n-1)!|\mathbb{S}^n|}\log\frac{1}{d_g(x,y)}|\leq C$$
	near the diagonal of $M\times M$.  Since $M$ is compact
	and $P_g(u+C)=P_gu$ for any constant $C$, we could choose a positive Green's function. From now on, we assume $G(x,y)>0.$ 
	Based on our assumptions, there holds
	\begin{equation}\label{h_k^+e^{nu_k}}
		\int_Mh_k^+\ud\mu_g\leq C.
	\end{equation}
	For  any $x_0\in M$, suppose that  there exists $r_0>0$ and $a_0>0$ such that
	\begin{equation}\label{equ: B_3r_0 h^+e2u}
		\int_{B_{3r_0}(x_0)}h^+_k\ud\mu_g\leq \frac{1}{2}(n-1)!|\mathbb{S}^n|-a_0
	\end{equation}
	for sufficiently large $k$. 
	Then,  for any $x\in B_{r_0}(x_0)$,
	we have
	\begin{align*}
		u_k(x)-\bar u_k
		=&\int_M G(x,y)h_k\ud\mu_g\\
		\leq &\int_M G(x,y)h^+_k\ud\mu_g\\
		=&\int_{B_{2r_0(x_0)}}G(x,y)h_k^+(y)\ud\mu_g(y)\\
		&+\int_{M\backslash B_{2r_0}(x_0)}G(x,y)h^+_k(y)\ud\mu_g(y)\\
		\leq &\int_{B_{2r_0(x_0)}}G(x,y)h_k^+\ud\mu_{g_b}(y)+C
	\end{align*}
	since $|G(x,y)|\leq C$ on $B_{r_0}(x_0)\times (M\backslash B_{2r_0}(x_0))$ and the estimate \eqref{h_k^+e^{nu_k}}. 
 Choose a cut-off function $0\leq \eta\leq 1$ with $\eta\equiv1 $ in $B_{2r_0}(x_0)$ and $\eta$ vanishes outside $B_{3r_0}(x_0)$.
	For $\alpha>0$, take the  strategy used  in \cite{BM}  and make use of  Jensen's inequality to get
	\begin{align*}
		&\int_{B_{r_0}(x_0)}e^{\alpha(u_k(x)-\bar u_k)}\ud\mu_g\\
		\leq &C\int_{B_{r_0}(x_0)}\exp\left(\int_{B_{2r_0}(x_0)}\alpha G(x,y)h^+_k(y)\ud\mu_{g}(y)\right)\ud\mu_g(x)\\
		\leq& C\int_{B_{r_0}(x_0)}\exp\left(\int_M\alpha G(x,y) h^+_k(y)\eta(y)\ud\mu_g(y)\right)\ud\mu_g(x)\\
		\leq & C\int_{B_{r_0}(x_0)}\int_M \exp\left(\alpha\|h^+_k\eta\|_{L^1(M)}G(x,y)\right)\frac{h^+_k\eta}{\|h_k^+\eta\|_{L^1(M)}}\ud\mu_{g}(y)\ud\mu_g(x)\\
		\leq& C\int_M\frac{h^+_k\eta}{\|h^+_k\eta\|_{L^1(M)}}\int_M\left(\frac{1}{d_g(x,y)}\right)^{\frac{2\alpha\|h^+_k\eta\|_{L^1(M)}}{(n-1)!|\mathbb{S}^n|}}\ud\mu_g(x)\ud\mu_g(y).
	\end{align*}
	With help of \eqref{equ: B_3r_0 h^+e2u},   there exists $\alpha_0>n$ such that the last integral is finite i.e.
	\begin{equation}\label{e^alpha u_k-bar u_k}
		\int_{B_{r_0}(x_0)}e^{\alpha_0(u_k-\bar u_k)}\ud\mu_g\leq C.
	\end{equation}
	If any $x\in M$, the estimate \eqref{equ: B_3r_0 h^+e2u} holds. Since $M$ is compact, there exist  finite balls covering  $M$ and then
	there exists $q_0>n$ such that
	$$\int_Me^{q_0(u_k-\bar u_k)}\ud\mu_g\leq C.$$
	
	Otherwise, due to $\int_M h_k^+\ud\mu_g\leq C$, there are finitely many points such \eqref{B_s h^+}  holds.
	
	Now, if the condtion $h_k^+\leq Ce^{nu_k}$ satisfies in addition,  for each $x_0$ as before such \eqref{equ: B_3r_0 h^+e2u} holds. 
	Then  for any $x\in B_{r_0/2}(x_0)$, with help of \eqref{e^alpha u_k-bar u_k}, one has 
	\begin{align*}
		u_k(x)-\bar u_k
		\leq &C+C\int_{B_{r_0}(x_0)}G(x,y) e^{nu_k}\ud\mu_g(y)\\
		\leq &C+C\left(\int_{B_{r_0}(x_0)}e^{\alpha_0 u_k}\ud\mu_{g}\right)^{n/\alpha_0}\left(\int_{B_{r_0}(x_0)}|G|^{\frac{1}{1-n/\alpha_0}}\ud\mu_{g}\right)^{1-n/\alpha_0}\\
		\leq & C
	\end{align*}
	which concludes that 
	\begin{equation}\label{u_k -bar u_k bound1}
		\|(u_k-\bar u_k)^+\|_{L^\infty(B_{r_0/2}(x_0))}\leq C.
	\end{equation}
	If for any $x\in M$, the estimate \eqref{equ: B_3r_0 h^+e2u} holds. With help of a finite covering, 
	one has
	\begin{equation}\label{u_k-bar u_k^+1}
		\|(u_k-\bar u_k)^+\|_{L^\infty(M)}\leq C.
	\end{equation}

Finally, we finish our proof.

\end{proof}
From now on, we assume $f_0$ has a $l$-type maximum point with $l\geq n$. Based on  the definition, it is not hard to see that  such $l$-type  maximum point is flat up to $n-1$ order. Meanwhile, we have 
$$|f_\lambda|\leq \lambda-f_0=2\lambda-f_\lambda,\quad f_\lambda^+\leq \lambda$$
and then the estimate \eqref{lambda alpha_lambda} yields that
\begin{equation}\label{|Q_k|}
	\lim_{\lambda\downarrow 0}\inf\int_M\alpha_\lambda|f_\lambda|e^{nu_\lambda}\ud\mu_g\leq 2(n-1)!|\mathbb{S}^n|,
\end{equation}
\begin{equation}\label{Q_k^+ intergral}
	\lim_{\lambda\downarrow 0}\inf\int_M\alpha_\lambda f_\lambda^+e^{nu_\lambda}\ud\mu_g\leq (n-1)!|\mathbb{S}^n|.
\end{equation}
Based on \eqref{lambda alpha_lambda},  we choose a subsequence $\lambda_k\to 0$ such that
\begin{equation}\label{lambda alpha upper bound}
	\lim_{k\to\infty}\lambda_k\alpha_{\lambda_k}\leq (n-1)!|\mathbb{S}^n|.
\end{equation}
For simplicity, set $u_k=u_{\lambda_k}$, $g_k=e^{2u_k}g$ and $Q_k:=\alpha_{\lambda_k}f_{\lambda_k}\leq \alpha_{\lambda_k}\lambda_k$. Recalling the equation \eqref{equ:u_lambda}, there holds 
$$P_gu_k=Q_ke^{nu_k}.$$

\begin{lemma}\label{lem:Q_k geq (n-1)!/2}
	Given $\{u_k\}$ as above, we have $$\bar u_k=\frac{1}{vol(M,g)}\int_Mu_k\ud\mu_g\to-\infty,\quad \alpha_{\lambda_k}\to \infty, \quad \mathrm{as}\; k\to\infty.$$ Moreover, there exist finite points $p_i$ with $1\leq i\leq i_0$ with $ i_0\in\{1,2\}$ and $f_0(p_i)=0$ such that for any $s>0$
	\begin{equation}\label{B_s Q^+}
		\lim_{k\to\infty}\inf\int_{B_s(p_i)}Q^+_ke^{nu_k}\ud\mu_g\geq \frac{1}{2}(n-1)!|\mathbb{S}^n|
	\end{equation}
as well as 
$$	\lim_{k\to\infty}\inf\int_{B_s(p_i)}e^{nu_k}\ud\mu_g\geq \frac{1}{2},\quad \lim_{k\to\infty}\inf\lambda_k\alpha_{\lambda_k}\geq\frac{1}{2}(n-1)!|\mathbb{S}^n|.$$
\end{lemma}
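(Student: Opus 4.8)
The plan is to apply Lemma \ref{lem:h_k^+ geq (n-1)!/2 general case} to the sequence $h_k = Q_k e^{nu_k} = P_g u_k$, after checking its hypotheses, and then to rule out alternative (a) and pin down the number and location of blow-up points. First I would verify $\int_M h_k^+\,\ud\mu_g \le C$: since $Q_k = \alpha_{\lambda_k} f_{\lambda_k}$ with $f_{\lambda_k}^+ \le \lambda_k$, we have $h_k^+ \le \alpha_{\lambda_k}\lambda_k e^{nu_k}$, and $\int_M e^{nu_k}\,\ud\mu_g = 1$ because $u_k \in \mathcal{M}^*(\lambda_k)$; combined with \eqref{lambda alpha upper bound} this gives $\int_M h_k^+\,\ud\mu_g \le \lambda_k\alpha_{\lambda_k} + o(1) \le (n-1)!|\mathbb{S}^n| + o(1)$. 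Note also that $h_k^+ \le C e^{nu_k}$ holds along this subsequence, so the stronger conclusions in both alternatives are available.

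Next I would show alternative (a) cannot occur. If it did, then $\|(u_k - \bar u_k)^+\|_{L^\infty(M)} \le C$, so $\int_M e^{n(u_k - \bar u_k)}\,\ud\mu_g \le C$; since $\int_M e^{nu_k}\,\ud\mu_g = 1$ this forces $e^{-n\bar u_k} \le C$, i.e. $\bar u_k$ is bounded below. On the other hand $\bar u_k \le \tfrac{1}{n}\log\int_M e^{nu_k}\,\ud\mu_g = 0$ by Jensen, so $\bar u_k$ would be bounded, and then $\|u_k^+\|_{L^\infty} \le C$. Feeding this into the $L^1$–duality estimate from Lemma \ref{lem: beta(lambda) upper boud} (the chain giving $0 < \|f_0\|_{L^1}^2 \le C\lambda_k \exp(\tfrac{n}{(n-1)!|\mathbb{S}^n|}\beta(\lambda_k))$) together with $\beta(\lambda_k)\to\infty$ from \eqref{beta_lambda tend to infty} yields a contradiction: more directly, if $u_k^+$ is bounded then $E(u_k) = \int_M u_k P_g u_k\,\ud\mu_g = \int_M u_k Q_k e^{nu_k}\,\ud\mu_g \le \alpha_{\lambda_k}\lambda_k \int_M u_k^+ e^{nu_k}\,\ud\mu_g \le C$, contradicting $\beta(\lambda_k) = E(u_k)\to\infty$. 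Hence alternative (b) holds, giving finitely many points $p_1,\dots,p_{i_0}$ with \eqref{B_s h^+}, which is exactly \eqref{B_s Q^+}; and $\bar u_k \to -\infty$ follows since otherwise the same boundedness of $u_k^+$ on a fixed ball away from the $p_i$ (conclusion of (b)) combined with Harnack-type control would contradict $\beta(\lambda_k)\to\infty$ — more cleanly, $\bar u_k$ bounded plus $\int e^{nu_k}=1$ would again make alternative (a) tenable. With $\bar u_k\to-\infty$, the relation $\alpha_{\lambda_k} \to \infty$ follows from $0 < \|f_0\|_{L^1}^2 \le \int_M |f_0| e^{-nu_k}\,\ud\mu_g \cdot \lambda_k$ and the fact that $\int_M e^{-nu_k}\,\ud\mu_g$ would otherwise be controlled; alternatively from $\beta(\lambda_k)\to\infty$ and $\beta(\lambda_k) \le \alpha_{\lambda_k}\lambda_k\int u_k^+ e^{nu_k} + (\text{bounded negative part contribution})$, noting $\lambda_k\to 0$.

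For the location statement $f_0(p_i) = 0$: near a point $p$ with $f_0(p) < 0$ we have $Q_k = \alpha_{\lambda_k}(f_0 + \lambda_k) \le \tfrac12\alpha_{\lambda_k} f_0(p) < 0$ on a small ball for large $k$, so $Q_k^+ e^{nu_k} \equiv 0$ there, contradicting \eqref{B_s Q^+}; hence each $p_i$ is a maximum point of $f_0$. For the lower bounds: summing \eqref{B_s Q^+} over $i$ and comparing with $\int_M Q_k^+ e^{nu_k}\,\ud\mu_g \le \lambda_k\alpha_{\lambda_k}\int_M e^{nu_k}\,\ud\mu_g = \lambda_k\alpha_{\lambda_k}$, then using \eqref{lambda alpha upper bound}, forces $i_0 \le 2$ and also $\lim\inf \lambda_k\alpha_{\lambda_k} \ge \tfrac12(n-1)!|\mathbb{S}^n|$ (taking $i_0\ge 1$). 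Finally, near each $p_i$ all the mass of $e^{nu_k}$ must concentrate: on $M\setminus\bigcup B_s(p_i)$ the conclusion of alternative (b) gives $u_k - \bar u_k \le C$, hence $\int_{M\setminus\bigcup B_s(p_i)} e^{nu_k}\,\ud\mu_g \le C e^{n\bar u_k}\to 0$, so $\lim\inf\int_{B_s(p_i)} e^{nu_k}\,\ud\mu_g \ge 1/i_0 \ge 1/2$. The main obstacle I anticipate is organizing the bookkeeping so that the three conclusions ($\bar u_k\to-\infty$, $\alpha_{\lambda_k}\to\infty$, and the mass lower bounds) are extracted in a non-circular order — the cleanest route is: (b) holds $\Rightarrow$ $u_k^+$ bounded off the $p_i$ $\Rightarrow$ ($\bar u_k$ bounded would give (a), contradiction) $\Rightarrow$ $\bar u_k\to-\infty$ $\Rightarrow$ mass concentrates $\Rightarrow$ the volume and $\lambda_k\alpha_{\lambda_k}$ lower bounds, with $\alpha_{\lambda_k}\to\infty$ read off from $\lambda_k\alpha_{\lambda_k}$ bounded below and $\lambda_k\to 0$.
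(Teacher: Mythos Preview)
Your overall strategy (apply Lemma \ref{lem:h_k^+ geq (n-1)!/2 general case}, rule out alternative (a), then extract the remaining conclusions from (b)) matches the paper's, and most of your later bookkeeping --- $f_0(p_i)=0$, $i_0\le 2$, $\alpha_{\lambda_k}\to\infty$ from $\lambda_k\alpha_{\lambda_k}\ge c$ --- is fine. But the two central steps, ruling out (a) and proving $\bar u_k\to-\infty$, have genuine gaps.

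\emph{Ruling out (a).} Your inequality $\int_M u_k Q_k e^{nu_k}\,\ud\mu_g \le \alpha_{\lambda_k}\lambda_k\int_M u_k^+ e^{nu_k}\,\ud\mu_g$ is false: on the set $\{u_k<0,\;Q_k<0\}$ the integrand $u_kQ_ke^{nu_k}$ is positive while $u_k^+=0$. What you would actually need to bound is $\int u_k^-Q_k^-e^{nu_k}$, and since $Q_k^-\le \alpha_{\lambda_k}(-f_0)$ this term is only controlled if $\alpha_{\lambda_k}$ is bounded --- which you do not know at this stage. Likewise, your fallback ``$\bar u_k$ bounded would give (a)'' is not an argument: (a) versus (b) is decided by the concentration of $h_k^+$, not by the size of $\bar u_k$; once you are in (b) you stay there, and there is no a priori obstruction to (b) holding together with $\bar u_k$ bounded.

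\emph{The missing idea.} The paper proves $\bar u_k\to -\infty$ \emph{first and independently}, using only the constraint $\int_M(-f_0)e^{nu_k}\,\ud\mu_g=\lambda_k\to 0$: Jensen's inequality with the probability weight $(-f_0)/\|f_0\|_{L^1}$ gives $\int_M(-f_0)u_k\,\ud\mu_g\to -\infty$, and since $\int_M u_k^+\,\ud\mu_g\le \tfrac1n\int_M e^{nu_k}=\tfrac1n$ this forces $\int_M u_k^-\to\infty$, hence $\bar u_k\to-\infty$. With this in hand, (a) is immediately impossible because (a) would give $1=\int e^{nu_k}\le Ce^{n\bar u_k}$, contradicting $\bar u_k\to-\infty$. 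This is the clean, non-circular order you were looking for.

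\emph{A smaller point.} Your argument for $\liminf\int_{B_s(p_i)}e^{nu_k}\ge 1/2$ only shows the \emph{total} mass in $\bigcup_i B_s(p_i)$ tends to $1$; it does not give $\ge 1/i_0$ for \emph{each} ball. The paper instead divides \eqref{B_s Q^+} by $Q_k^+\le \alpha_{\lambda_k}\lambda_k$ and uses \eqref{lambda alpha upper bound} to get $\liminf\int_{B_s(p_i)}e^{nu_k}\ge \tfrac12$ directly for each $i$, which then also yields $i_0\le 2$ via the unit-volume constraint.
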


\begin{proof}
		Firstly, we claim that 
		\begin{equation}\label{bar u_k to-infty}
			\bar u_k\to-\infty,\quad \mathrm{as}\quad k\to\infty.
		\end{equation}
		Based on $u_\lambda \in \mathcal{M}^*(\lambda)$, there holds
		 $$\int_M (-f_0)e^{nu_k}\ud\mu_g=\lambda_k\int_Me^{nu_k}\ud\mu_g=\lambda_k$$
	and apply Jensen's inequality to get
	$$\exp\left(\int_Mnu_k\frac{(-f_0)}{\|f_0\|_{L^1(M)}}\ud\mu_g\right)\leq \int_Me^{nu_k}\frac{-f_0}{\|f_0\|_{L^1(M)}}\ud\mu_g=\frac{\lambda_k}{\|f_0\|_{L^1(M)}}.$$
	Hence we have
	$$\int_M(-f_0)u_k\ud\mu_g\to-\infty,\quad \mathrm{as}\quad k\to\infty.$$
	Notice that
	$$|\int_M(-f_0)u_k^-\ud\mu_g+\int_M(-f_0)u_k\ud\mu_g|=\int_M(-f_0)u_k^+\ud\mu_g\leq \frac{\max_M(-f_0)}{n}\int_Me^{nu_k}\ud\mu_g\leq C,$$
	and 
	$$\int_M(-f_0)u_k^-\ud\mu_g\leq \max_M(-f_0)\int_Mu_k^-\ud\mu_g$$ which yields 
	$$\int_Mu_k^-\ud\mu_g\to+\infty,\quad \mathrm{as}\quad k\to\infty.$$
	Then with help of the fact $x\leq e^x$,
\begin{align*}
		\int_Mu_k\ud\mu_g=&\int_Mu_k^+\ud\mu_g-\int_Mu_k^-\ud\mu_g\\
		\leq&\frac{1}{n}\int_Me^{nu_k}\ud\mu_g-\int_Mu_k^-\ud\mu_g\\
		=& \frac{1}{n}-\int_Mu_k^-\ud\mu_g\to-\infty.
\end{align*}
Thus we prove our claim \eqref{bar u_k to-infty}.

With help of \eqref{lambda alpha upper bound}, one has
$$\int_MQ_k^+e^{nu_k}\ud\mu_g\leq \lambda_k\alpha_{\lambda_k}\int_Me^{nu_k}\ud\mu_g\leq C, \quad Q_k^+e^{nu_k}\leq Ce^{nu_k}.$$
We will  apply Lemma \ref{lem:h_k^+ geq (n-1)!/2 general case} and rule out Case (a) in Lemma \ref{lem:h_k^+ geq (n-1)!/2 general case}. We argue by contradiction.  Supposing  that Case (a) in Lemma \ref{lem:h_k^+ geq (n-1)!/2 general case} holds, there holds
$$\|(u_k-\bar u_k)^+\|_{L^\infty(M)}\leq C$$
and then 
	\begin{equation}\label{bar u_k lower bound}
		1=\int_Me^{nu_k}\ud\mu_g=\int_Me^{n(u_k-\bar u_k)}\ud\mu_ge^{n\bar u_k}\leq Ce^{n\bar u_k}.
	\end{equation}
which contradicts to \eqref{bar u_k to-infty}.  Thus there are only finite  points $p_i\in M$  with $1\leq i\leq i_0$, $i_0\in \mathbb{N}$ such 
	\eqref{B_s Q^+} holds. 
	With help of the estimate
	$$\int_{B_s(p_i)}Q^+_ke^{nu_k}\ud\mu_g\leq\alpha_{\lambda_k}\lambda_k\int_Me^{nu_k}\ud\mu_g=\alpha_{\lambda_k}\lambda_k,$$
	 there holds
	$$\lim_{k\to\infty}\inf\alpha_{\lambda_k}\lambda_k\geq\frac{1}{2}(n-1)!|\mathbb{S}^n|.$$
	Immediately, 
	one has  
	$\alpha_{\lambda_k}\to\infty$ as $  \lambda_k\to 0.$
	Meanwhile,
	Due to $Q_k^+\leq \alpha_{\lambda_k}\lambda_k$, the estimates \eqref{lambda alpha upper bound} and \eqref{B_s Q^+} yield that
	\begin{equation}\label{e^nu_k geq 1/2}
		\lim_{k\to\infty}\inf \int_{B_s(p_i)}e^{nu_k}\ud\mu_g\geq \frac{1}{2}.
	\end{equation}
Based on the unit volume property
$\int_Me^{nu_k}\ud\mu_g=1,$
$i_0$ is either $1$ or $2$.

	The remaining task is to demonstrate   $f(p_i)=0$ for each $1\leq i\leq i_0$.
	We can establish this argument by means of a contradiction. Suppose that
	$f_0(p_i)<0$ for some $i$.
 Then, we arrive at a contradiction because  there exists $s>0$ such that for any $x\in B_s(p_i)$ and sufficiently  large $k$
	$$Q_k=\alpha_{\lambda_k}(\lambda_k+f_0(x))\leq 0,$$
	and then 
	$$\int_{B_s(p_i)}Q^+_ke^{nu_k}\ud\mu_g\leq 0$$
	which contradicts to \eqref{B_s Q^+}.
	
	In conclusion, we have successfully demonstrated all desired results. Thus, the proof is complete.
\end{proof}

\begin{remark}\label{Remark}
	Just as  Remark 3.2 in \cite{Ma}, using the same proof, we can  extend Lemma \ref{lem:Q_k geq (n-1)!/2} to the case in which also the metric on $M$ depends on $k$, and converges to some smooth $g$ in $C^m(M)$ for any integer $m$. We will use this variant later.
\end{remark}

Following the argument of Lemma 2.3 in \cite{Ma}, it is not hard to get the following lemma with help of \eqref{|Q_k|} and the representation of Green's function.
\begin{lemma}\label{lem: nabla ju}
	There is a constant depending only on $M$, $f_0$, $j$ and $p$ such that for any $1\leq j\leq n-1$ and $p>0$ satisfying $jp<n$,  small $r>0$ and any $x\in M$, there holds
	$$ \int_{B_r(x)}|\nabla ^ju_k|^p\ud\mu_g\leq Cr^{n-jp}.$$
\end{lemma}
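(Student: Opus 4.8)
The plan is to use the Green representation of $u_k$ and reduce the statement to a uniform estimate for a Riesz-type potential integrated against a measure of bounded total mass. Let $G(x,y)$ denote the positive Green's function of $P_g$ introduced in the proof of Lemma~\ref{lem:h_k^+ geq (n-1)!/2 general case}. Since $\int_M Q_ke^{nu_k}\,\ud\mu_g=\int_M P_gu_k\,\ud\mu_g=0$, the standard solvability argument produces a constant $c_k$ with
$$u_k(z)=\int_M G(z,y)\,Q_k(y)e^{nu_k(y)}\,\ud\mu_g(y)+c_k ,$$
so that for every $1\le j\le n-1$, using $\nabla^j c_k=0$ and the parametrix bound $|\nabla_z^jG(z,y)|\le C\,d_g(z,y)^{-j}$ (valid uniformly on $M\times M$, since $G$ is smooth off the diagonal and behaves like $\log\tfrac1{d_g(x,y)}$ near it),
$$|\nabla^j u_k(z)|\le \int_M|\nabla_z^jG(z,y)|\,|Q_k(y)|e^{nu_k(y)}\,\ud\mu_g(y)\le C\int_M d_g(z,y)^{-j}\,\ud\nu_k(y),$$
where $\ud\nu_k:=|Q_k|e^{nu_k}\,\ud\mu_g$. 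By \eqref{|Q_k|} together with the choice of the subsequence $\{\lambda_k\}$ (recall \eqref{lambda alpha upper bound}), the total mass $\nu_k(M)=\int_M|Q_k|e^{nu_k}\,\ud\mu_g$ is bounded by a constant independent of $k$.

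Next I would record the elementary kernel estimate: if $jp<n$, then for all $x,y\in M$ and all small $r>0$,
$$\Big(\int_{B_r(x)}d_g(z,y)^{-jp}\,\ud\mu_g(z)\Big)^{1/p}\le C\,r^{(n-jp)/p},$$
which follows by splitting into the cases $d_g(x,y)\ge 2r$ (where $d_g(z,y)\ge r$ on $B_r(x)$, so the integral is $\le C r^{n-jp}$) and $d_g(x,y)<2r$ (where $B_r(x)\subset B_{3r}(y)$ and $\int_{B_{3r}(y)}d_g(z,y)^{-jp}\,\ud\mu_g(z)\le C\int_0^{3r}t^{n-1-jp}\,\ud t\le Cr^{n-jp}$, using $n-jp>0$). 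For $p\ge1$, Minkowski's integral inequality applied to the pointwise bound above then yields
$$\|\nabla^j u_k\|_{L^p(B_r(x))}\le C\int_M\Big(\int_{B_r(x)}d_g(z,y)^{-jp}\,\ud\mu_g(z)\Big)^{1/p}\ud\nu_k(y)\le C\,r^{(n-jp)/p}\,\nu_k(M)\le C\,r^{(n-jp)/p},$$
that is, $\int_{B_r(x)}|\nabla^j u_k|^p\,\ud\mu_g\le Cr^{n-jp}$ with $C$ independent of $k$ and $x$. For $0<p<1$ one automatically has $jp<j\le n-1<n$; fixing any exponent $p'$ with $1\le p'<n/j$ (possible since $n/j\ge n/(n-1)>1$) and applying H\"older's inequality on $B_r(x)$ with exponents $p'/p$ and its conjugate reduces matters to the case already treated:
$$\int_{B_r(x)}|\nabla^j u_k|^p\,\ud\mu_g\le\Big(\int_{B_r(x)}|\nabla^j u_k|^{p'}\,\ud\mu_g\Big)^{p/p'}|B_r(x)|^{1-p/p'}\le C\big(r^{n-jp'}\big)^{p/p'}\big(r^n\big)^{1-p/p'}=C\,r^{n-jp}.$$

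The only genuinely non-routine ingredient is the pointwise derivative bound $|\nabla_z^jG(z,y)|\le C\,d_g(z,y)^{-j}$ for the Green's function of the GJMS operator; this is classical, following from the local parametrix expansion $G(x,y)=a(x,y)\log\tfrac1{d_g(x,y)}+b(x,y)$ with $a,b$ smooth together with the smoothness of $G$ off the diagonal, and I would simply cite it (e.g.\ \cite{CY95}, \cite{Nid}, \cite{Ma}). Everything else is a standard Riesz-potential computation; the points deserving a little care are the uniformity of all constants in both $x$ and $k$ and the reduction of the sub-unit range $0<p<1$ to the case $p\ge1$.
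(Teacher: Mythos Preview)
Your proof is correct and follows essentially the same route as the paper's (omitted) argument, which simply defers to Lemma~2.3 of \cite{Ma}: Green's representation of $u_k$, the derivative bound $|\nabla_z^jG(z,y)|\le C\,d_g(z,y)^{-j}$, uniform control of the total mass $\int_M|Q_k|e^{nu_k}\,\ud\mu_g$ along the chosen subsequence (which indeed follows from $|f_\lambda|\le 2\lambda-f_\lambda$ and \eqref{lambda alpha upper bound}), and a Riesz-potential/Minkowski estimate. Your explicit reduction of the range $0<p<1$ to $p\ge1$ via H\"older is a nice touch that makes the full range of $p$ in the statement transparent.
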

\begin{proof}
	The proof is the same as Lemma 2.3 of \cite{Ma}. We omit the details here.
\end{proof}

With help of Lemma \ref{lem:Q_k geq (n-1)!/2}, there exists a  small $\delta>0$ such that $B_{2\delta}(p_i)$ are disjoint. For any blow-up point  $p_i$ (for simplicity we denote as $P$), we choose $r_k$ and $x_k$ such that
\begin{equation}\label{choice r_k and x_k}
	\int_{B_{r_k}(x_k)}e^{nu_k}\ud\mu_g=\sup_{x\in \bar{B}_{\delta}(P)}\int_{B_{r_k}(x)}e^{nu_k}\ud\mu_g=\frac{1}{8}.
\end{equation}
\begin{lemma}\label{lem:r_kto 0}
	There holds  
	$$r_k\to 0,\quad x_k\to P.$$
	Moreover, if $P$ is a $l$-type maximum point of $f_0$, one has  
	$$r_k^{l}\leq C\lambda_k,\quad d_g(x_k,P)^{l}\leq C\lambda_k. $$
\end{lemma}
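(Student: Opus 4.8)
The plan is to argue by contradiction in two stages: first show that $r_k\to 0$, then that $x_k\to P$, and finally quantify the rate of decay of $r_k$ and $d_g(x_k,P)$ using the flatness of $f_0$ at the $l$-type maximum point. For the first claim, suppose along a subsequence that $r_k\geq r_0>0$. Then by the choice \eqref{choice r_k and x_k} every ball of radius $r_0$ in $\bar B_\delta(P)$ carries at most $\tfrac18$ of the unit mass $e^{nu_k}\ud\mu_g$; covering $\bar B_\delta(P)$ (or indeed a neighbourhood of any blow-up point) by finitely many such balls of controlled overlap and applying Lemma \ref{lem:h_k^+ geq (n-1)!/2 general case} (or rather the local bound $\|(u_k-\bar u_k)^+\|_{L^\infty}\le C$ that it produces on regions where the local $Q_k^+e^{nu_k}$-mass is below the threshold), we get that $e^{n(u_k-\bar u_k)}$ is bounded on $B_\delta(P)$, so $\int_{B_s(P)}e^{nu_k}\ud\mu_g\le Ce^{n\bar u_k}\to 0$ by \eqref{bar u_k to-infty}, contradicting the lower bound $\tfrac12$ from \eqref{e^nu_k geq 1/2} in Lemma \ref{lem:Q_k geq (n-1)!/2}. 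Hence $r_k\to 0$.

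For $x_k\to P$: by the concentration lower bound \eqref{e^nu_k geq 1/2}, at least mass $\tfrac12$ of $e^{nu_k}\ud\mu_g$ sits in $B_s(P)$ for every fixed small $s$, while by \eqref{choice r_k and x_k} no ball of radius $r_k$ carries more than $\tfrac18$. If $x_k$ stayed at definite distance from $P$, one could cover $B_s(P)$ by finitely many $r_k$-balls all centred outside a neighbourhood of $x_k$ — but more directly, the standard fact is that the concentration of mass forces the supremum in \eqref{choice r_k and x_k} to be attained near the concentration point, so $d_g(x_k,P)\to 0$; one shows that if $d_g(x_k,P)\geq c>0$ then a ball around $P$ of small radius would still have to capture mass $\geq \tfrac18$, contradicting maximality of the ball around $x_k$, or alternatively that the $(u_k-\bar u_k)^+$ bound away from $P$ forces the local mass to vanish there, so the sup is realized at $x_k\to P$.

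For the quantitative rate, rescale: set $v_k(z)=u_k\circ\exp_{x_k}(r_kz)+\log r_k$ on a fixed Euclidean ball, which satisfies a rescaled equation $(-\Delta)^{n/2}v_k = \tilde Q_k e^{nv_k}$ with $\tilde Q_k(z)=Q_k(\exp_{x_k}(r_kz))$ (plus lower-order metric error terms, handled as in Remark \ref{Remark}), and by \eqref{choice r_k and x_k} the normalized mass $\int_{B_1(0)}e^{nv_k}\,dz$ equals $\tfrac18$. The blow-up analysis (Lemmas \ref{lem: nabla ju} and the compactness of normal solutions) shows $v_k$ converges in $C^0_{loc}$, away from finitely many points, to a nontrivial limit; in particular $\int_{B_1(0)}\tilde Q_k^+ e^{nv_k}\,dz$ stays bounded below by a positive constant. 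But $\tilde Q_k = \alpha_{\lambda_k} f_{\lambda_k}(\exp_{x_k}(r_k z))$, and $f_{\lambda_k} = f_0 + \lambda_k$ with $f_0(\exp_P(w))\leq -c|w|^l$ near the $l$-type maximum point $P$; hence on $B_1(0)$ we have $\tilde Q_k^+ \le \alpha_{\lambda_k}(\lambda_k - c\,d_g(\exp_{x_k}(r_kz),P)^l)^+ \le \alpha_{\lambda_k}\lambda_k$, and more sharply $\tilde Q_k^+$ vanishes wherever $d_g(\exp_{x_k}(r_kz),P)^l \geq \lambda_k/c$. Since $d_g(\exp_{x_k}(r_kz),P)\gtrsim d_g(x_k,P)$ or $\gtrsim r_k|z|$ depending on which term dominates, the requirement that $\int_{B_1(0)}\tilde Q_k^+ e^{nv_k}\,dz$ not collapse forces both $d_g(x_k,P)^l \le C\lambda_k$ and $r_k^l \le C\lambda_k$. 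The main obstacle is making this last step rigorous: one must ensure the rescaled mass $\int_{B_1}e^{nv_k}$ is genuinely concentrated where $v_k$ is not going to $-\infty$ — i.e. that the limit profile is a bona fide bubble and the $Q_k^+$-mass it carries is positive — which is exactly where the $L^1$-bounds \eqref{|Q_k|}, \eqref{Q_k^+ intergral} and the Green's representation from Lemma \ref{lem: nabla ju} enter, and where care is needed because $Q_k$ changes sign and is only bounded, not convergent, on the rescaled ball.
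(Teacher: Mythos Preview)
Your arguments for $r_k\to 0$ and $x_k\to P$ are correct but more elaborate than needed. The paper dispatches $r_k\to 0$ in one line: the sup condition in \eqref{choice r_k and x_k} gives $\int_{B_{r_k}(P)}e^{nu_k}\ud\mu_g\le\tfrac18$, and if $r_k\ge r_0>0$ along a subsequence this directly contradicts the concentration $\liminf_k\int_{B_{r_0}(P)}e^{nu_k}\ud\mu_g\ge\tfrac12$ from Lemma \ref{lem:Q_k geq (n-1)!/2}. No covering and no appeal to Lemma \ref{lem:h_k^+ geq (n-1)!/2 general case} are required. For $x_k\to P$ the paper uses exactly your second alternative: if $x_k\to x_0\neq P$ then eventually $B_{r_k}(x_k)\subset\subset M\setminus\{p_1,\dots,p_{i_0}\}$, where $(u_k-\bar u_k)^+$ is bounded, so $\int_{B_{r_k}(x_k)}e^{nu_k}\ud\mu_g\le Ce^{n\bar u_k}\to 0$, contradicting $=\tfrac18$.

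For the quantitative rates your rescaling approach has a genuine gap, and you correctly flag it: nothing you write establishes that $\int_{B_1}\tilde Q_k^+ e^{nv_k}\,\ud z$ stays bounded below. If the rates fail then $\tilde Q_k^+$ is supported on a set shrinking to a point (or escaping $B_1$), and the mere nontriviality of a $C^0_{loc}$ limit profile does not prevent this integral from collapsing; worse, $|\tilde Q_k|$ is then unbounded on compacta, so the convergence of $v_k$ you invoke is itself not secured. The paper avoids rescaling entirely at this stage and argues by direct integral estimates on $M$. For $r_k^l\le C\lambda_k$: since $P$ is $l$-type one has $\mathrm{supp}(Q_k^+)\cap B_\delta(P)\subset B_{C_1\lambda_k^{1/l}}(P)$; if $r_k^l/\lambda_k\to\infty$ this support lies inside $B_{r_k}(P)$, and then
\[
\int_{B_\delta(P)}Q_k^+e^{nu_k}\ud\mu_g=\int_{B_{C_1\lambda_k^{1/l}}(P)}Q_k^+e^{nu_k}\ud\mu_g\le \alpha_{\lambda_k}\lambda_k\int_{B_{r_k}(P)}e^{nu_k}\ud\mu_g\le \alpha_{\lambda_k}\lambda_k\cdot\tfrac18\le\tfrac14(n-1)!|\mathbb{S}^n|
\]
by the sup condition and \eqref{lambda alpha upper bound}, contradicting \eqref{B_s Q^+}. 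For $d_g(x_k,P)^l\le C\lambda_k$: the $L^1$-bound on $|Q_k|e^{nu_k}$ together with $-f_0(x)\ge c_1 d_g(x,P)^l$ and $\alpha_{\lambda_k}\lambda_k\sim 1$ yields $\int_{B_{r_k}(x_k)}\lambda_k^{-1}d_g(x,P)^l e^{nu_k}\ud\mu_g\le C$; on $B_{r_k}(x_k)$ one has $\lambda_k^{-1/l}d_g(x,P)\ge \lambda_k^{-1/l}d_g(x_k,P)-\lambda_k^{-1/l}r_k\ge \lambda_k^{-1/l}d_g(x_k,P)-C$ by the already-established bound on $r_k$, and combining with $\int_{B_{r_k}(x_k)}e^{nu_k}\ud\mu_g=\tfrac18$ gives the claim. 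The rescaling and extraction of a limit profile are deferred to Section \ref{section 4}, \emph{after} these rates are available --- which is precisely what makes the rescaled curvature $\hat Q_k$ convergent on compacta there.
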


\begin{proof}
	Firstly, based on  the choices of $x_i$ and $r_i$, we have
	$$\frac{1}{8}=\int_{B_{r_k}(x_k)}e^{nu_k}\ud\mu_g\geq \int_{B_{r_k}(P)}e^{nu_k}\ud\mu_g.$$ Then the estimate 
\eqref{e^nu_k geq 1/2} yields that $r_k\to 0$.
	With helpf of $r_k\to 0 $, if there is a subsequence $x_k\to x_0\in B_{\delta}(P)\backslash\{P\}$, for sufficiently large $k$, one has
	$$B_{r_k}(x_k)\subset\subset M\backslash\{p_i;1\leq i\leq i_0\}.$$ However,  making use of Lemma \ref{lem:Q_k geq (n-1)!/2}, there holds
	$$\int_{B_{r_k}(x_k)}e^{nu_k}\ud\mu_g\leq \int_{B_{r_k}(x_k)}e^{n(u_k-\bar u_k)}\ud\mu_ge^{n\bar u_k}\leq Ce^{n\bar u_k}\to 0$$
	as $k\to \infty$ which contradicts to our choice \eqref{choice r_k and x_k}.
	
	Suppose a subsequence $\frac{r^{l}_k}{\lambda_k}\to \infty$, based on the Definition \ref{def: 2l max point}, there exists
	a constant $C_1>0$ such that
	$$supp(Q^+_k)\cap B_{\delta}(P)\subset B_{C_1\lambda_k^{\frac{1}{l}}}(P).$$
 Then, with help of \eqref{lambda alpha upper bound} and \eqref{choice r_k and x_k}, for sufficiently large $k$,  there holds
\begin{align*}
	\frac{1}{4}(n-1)!|\mathbb{S}^n|\geq  &\alpha_{\lambda_k}\lambda_k\int_{B_{r_k}(x_k)}e^{nu_k}\ud\mu_g\\
	\geq &\alpha_{\lambda_k}\lambda_k\int_{B_{r_k}(P)}e^{nu_k}\ud\mu_g\\
	\geq &\int_{B_{r_k}(P)}Q^+_ke^{nu_k}\ud\mu_g\\
	\geq &\int_{B_{C_1\lambda_k^{1/l}}(P)}Q^+_ke^{nu_k}\ud\mu_g\\
	=&\int_{B_\delta(P)}Q_k^+e^{nu_k}\ud\mu_g
\end{align*}
which contradicts to Lemma \ref{lem:Q_k geq (n-1)!/2}.
Thus 
\begin{equation}\label{r_k upper bound}
	r_k^{l}\leq C\lambda_k.
\end{equation}
Due to $P$ is a $l$-type maximum point of $f_0$,  there exists $c_1>0$ such that for small $\delta>0$
$$-f_0(x)\geq c_1d_g(x,P)^{l},\quad x\in B_{\delta}(P).$$
Using the estimate \eqref{lambda alpha upper bound} and Lemma \ref{lem:Q_k geq (n-1)!/2}, there holds , for large $k$
$$C^{-1}\leq \lambda_k\alpha_{\lambda_k}\leq C.$$  With help of this fact, 
then there holds
\begin{align*}
	C\geq&\int_{B_{r_k}(x_k)}|Q_k|e^{nu_k}\ud\mu_g\\
	\geq &\int_{B_{r_k}(x_k)}\alpha_{\lambda_k}\left(-f_0-\lambda_k\right)e^{nu_k}\ud\mu_g\\
	\geq& c_1\int_{B_{r_k}(x_k)}\alpha_{\lambda_k}d_g(x,P)^{l}e^{2u_k}\ud\mu_g-C\\
	\geq &C\int_{B_{r_k}(x_k)}\lambda_k^{-1}d_g(x,P)^{l}e^{2u_k}\ud\mu_g-C
\end{align*}
which shows that
\begin{equation}\label{lambda_k^-1d_g(x,P)}
	\int_{B_{r_k}(x_k)}\lambda_k^{-1}d_g(x,P)^{l}e^{2u_k}\ud\mu_g\leq C.
\end{equation}
For $x\in B_{r_k}(x_k)$, with help of \eqref{r_k upper bound}, one has
\begin{equation}\label{lambda_k d_g(x,P)}
	\lambda_k^{-\frac{1}{l}}d_g(x,P)
	\geq\lambda_k^{-\frac{1}{l}}d_g(x_k,P)-\lambda_k^{-\frac{1}{l}}r_k
	\geq \lambda_k^{-\frac{1}{l}}d_g(x_k,P)-C.
\end{equation}
Combing these estimates  \eqref{choice r_k and x_k}, \eqref{lambda_k^-1d_g(x,P)} and \eqref{lambda_k d_g(x,P)}, we must have
$$\lambda_k^{-\frac{1}{l}}d_g(x_k,P)\leq C.$$
Finally, we complete our proof.
\end{proof}

\section{Proof of Main theorem}\label{section 4}
With help of higher order Bol's inequality in \cite{LW}, we are going to give the  proof  of Theorem \ref{main theorem}.

{\bf Proof of Theorem \ref{main theorem}:}
\begin{proof}
Following the strategy taken in \cite{Ma}, given small $\delta >0$, consider the exponential map
$$\exp_P:\hat B_{\delta}(0)\to M,\quad \exp_P(0)=P$$
where $\hat B_{\delta}(0):=\{z\in\mr^n||z|<\delta\}.$ We define
the metric on $\hat B_\delta(0)$ by $\tilde g_k:=(\exp_P)^*g$ and set
$$\tilde u_k=u_k\circ\exp_P,\quad z_k=\exp_P^{-1}(x_k)$$
where $x_k$ and $r_k$ come from \eqref{choice r_k and x_k}.
Consider the linear transformation $T_k:z\rightarrow r_kz+z_k$ and set
$$\hat u_k(z)=\tilde u_k(T_kz)+\log r_k$$
for $z\in D_k:=\{z\in \mr^n||z_k+r_kz|<\delta\}$.
Since $r_k\to 0$ and $z_k\to 0$(See Lemma \ref{lem:r_kto 0}), we find that $D_k$ exhaust $\mr^n$ as $k\to\infty$ and $\hat B_{\frac{\delta}{2r_k}}(0)\subset D_k$ for sufficiently large $k$.
Set $\hat g_k=r_k^{-2}T_k^*\tilde g_k$ and $\hat f_k(z)=\lambda_k+f_0(\exp_P(r_kz+z_k))$.
Due to the conformal property of $P_g$, there holds
$$P_{\hat g_k}\hat u_k(z)=\alpha_{\lambda_k}\hat f_ke^{n\hat u_k},\; \; z\in D_k.$$

Notice that $\hat g_k$ converges in $C^m_{loc}(\mr^n)$ to the flat metric $(dz)^2$ for any integer $m$.
By \eqref{choice r_k and x_k}, using a change of variables, we have
$$\frac{1}{8}=\int_{B_{r_k}(x_k)}e^{nu_k}\ud\mu_g=\int_{\hat B_1(0)}e^{n\hat u_k}\ud\mu_{\hat g_k}.$$ For any $y\in \hat B_{\frac{\delta}{2r_k}}(0)$, there holds
$$\int_{\hat B_{\frac{1}{2}}(y)}e^{n\hat u_k}\ud \mu_{\hat g_k}\leq \frac{1}{8}.$$
Then due to \eqref{lambda alpha upper bound}, 
for sufficiently large $k$, 
we have
\begin{equation}\label{int_Q_kleq 1/4}
	\int_{\hat B_{1/2}(y)}\hat Q_k^+e^{n\hat u_k}\ud\mu_{g_k}\leq \alpha_{\lambda_k}\lambda_k\int_{\hat B_{1/2}(y)}e^{n\hat u_k}\ud\mu_{g_k}\leq \frac{1}{4}(n-1)!|\mathbb{S}^n|
\end{equation}
where $\hat Q_k=\alpha_{\lambda_k}\hat f_k$.
Given $R>0$, define a smooth cut-off function $\Psi_R$ satisfying
\begin{align*}
	\left\{ 	\begin{array}{ll}
		\Psi_R(z)=1 &\mathrm{for}\; |z|\leq \frac{R}{2}\\
		\Psi_R(z)=0& \mathrm{for}\;|z|\geq R
	\end{array}	\right.
\end{align*}
We also set 
$$a_k=\frac{1}{|\hat B_R|_{\hat g_k}}\int_{\hat B_R}\hat u_k\ud \mu_{\hat g_k}$$
$$v_k=\Psi_R\hat u_k+(1-\Psi_R)a_k=a_k+\Psi_R(\hat u_k-a_k)$$
$$\hat v_k=v_k-a_k.$$
Notice that $\hat v_k$ is identically zero outside $\hat B_R$. By Lemma \ref{lem: nabla ju}, we have
\begin{equation}\label{nabla jv_k}
	\int_{\hat B_{2R}}\sum^{n-1}_{j=1}|\nabla^j\hat u_k|^p\ud \mu_{\hat g_k}\leq C_R,\quad p\in (1,\frac{n}{n-1}).
\end{equation} 
Due to $\hat v_k$ has a uniform compact support, the Poincar\'e inequality yields that
$$\int_{\hat B_R}|\hat v_k|^p\ud\mu_{\hat g_k}\leq C_R,\quad p\in (1,\frac{n}{n-1}).$$
Meanwhile,
$$P_{\hat g_k}\hat v_k=\Psi_R P_{\hat g_k}\hat u_k+L_k(\hat u_k-a_k)=\Psi_R\hat Q_ke^{nu_k}+L_k(\hat u_k-a_k)$$
where $L_k(\hat u_k-a_k)$ are linear opeartors which contains derivatives of order $0\leq j\leq n-1$ with uniformly bounded and smooth coefficients. Then
$$\int_{\hat B_{2R}}|L_k(\hat u_k-a_k)|^p\ud\mu_{\hat g_k}\leq C_R,\quad p\in (1,\frac{n}{n-1}).$$
Hence using Lemma \ref{lem:Q_k geq (n-1)!/2} and Remark \ref{Remark}, there exists $q>1$ such that
\begin{equation}\label{e^nqv_k}
	\int_{\hat B_R}e^{nq\hat v_k}\ud\mu_{\hat g_k}\leq C
\end{equation}
due to \eqref{int_Q_kleq 1/4}.
Actually, since $\hat v_k$ vanish identically outside $\hat B_R$, we can embed a fixed neighborhood  of $(\hat B_{2R}, \hat g_k) $ into a compact manifold, a torus for example, such its metric converges to a flat one.  

On the other hand,
$$a_k=\frac{1}{|\hat B_R|_{\hat g_k}}\int_{\hat B_R}\hat u_k\mu_{\hat g_k}\leq \frac{1}{n|\hat B_R|_{\hat g_k}}\int_{\hat B_R}e^{n\hat u_k}\mu_{\hat g_k}\leq C.$$
Since $v_k=\hat u_k$ in $\hat B_R$, there holds
$$\frac{1}{8}=\int_{\hat B_1(0)}e^{n\hat u_k}\mu_{\hat g_k}\leq e^{na_k}\int_{\hat B_R}e^{n\hat v_k}\ud\mu_{\hat g_k}\leq Ce^{n a_k}.$$
Thus $$|a_k|\leq C.$$ Combing with \eqref{e^nqv_k}, one has
$$\int_{\hat B_R}e^{nq\hat u_k}\ud\mu_{\hat g_k}\leq C$$
for some $q>1$. Thus standard elliptic theory yields that $\hat u_k$ is bounded in $W^{4,q}(\hat B_{R/2})$. By the arbitrary choice of $R$, up to a subsequence, Sobolev embedding theorem shows that $(\hat u_k)$ converge strongly  in $C^{\alpha}_{loc}(\mr^n)$ for some $\alpha\in (0,1)$ and also strongly in $H^{\frac{n}{2}}_{loc}(\mr^n)$ to a function $\hat u_\infty\in C^{\alpha}_{loc}(\mr^n)\cap H^{\frac{n}{2}}_{loc}(\mr^n)$. 

Due to Lemma \ref{lem:r_kto 0}, there are two possible cases. We will discuss them one by one.

Firslty, by the Definition \ref{def: 2l max point}, we have 
\begin{align*}
	\alpha_{\lambda_k}(\lambda_k+\hat f_0)=&\alpha_{\lambda_k}\lambda_k(1+\lambda_k^{-1}p_{l}(z_k+r_kz)+\lambda_k^{-1}O(|z_k+r_kz|^{l+1}))\\
	=&\alpha_{\lambda_k}\lambda_k(1+p_{l}(\lambda_k^{-\frac{1}{l}}z_k+\lambda_k^{-\frac{1}{l}}r_kz)+\lambda_k^{-1}|z_k+r_kz|^{l}O(|z_k+r_kz|))
\end{align*}
where $p_{l}(z)$ is a homogeneous function of degree $l$ satisfying $p_{l}(z)\leq 0$.
With help of Lemma \ref{lem:r_kto 0}, we have
$$\lambda_k^{-1}|z_k+r_kz|^{l}O(|z_k+r_kz|)\to 0$$
uniformly on $\hat B_R(0)$. Suppose, up to a subsequence, 
$\lambda_k\alpha_{\lambda_k}\to \mu$
with
\begin{equation}\label{mu leq (n-1)!S^n}
	\frac{1}{2}(n-1)!|\mathbb{S}^n|\leq \mu\leq(n-1)!|\mathbb{S}^n|
\end{equation}
due to Lemma \ref{lem:Q_k geq (n-1)!/2}.

Suppose  there is a subsequence such that 
$$\lim_{k\to\infty}\frac{r_k^{l}}{\lambda_k}\to r_0>0.$$
In addition, up to a subsequence, we have 
$$\lim_{k\to\infty}\lambda^{-\frac{1}{l}}_kz_k\to z_0\in \mr^n.$$ 
Thus
$$\lim_{k\to\infty}	\alpha_{\lambda_k}(\lambda_k+\hat f_0)=\mu(1+p_{l}(z_0+r_0z))$$
uniformly on $\hat B_R(0)$.
Then $\hat u_\infty\in H^{n/2}_{loc}(\mr^n)$ weakly solve the equation
\begin{equation}\label{equ: u_infty case2}
	(-\Delta)^{\frac{n}{2}}\hat u_\infty=\mu(1+p_{l}(z_0+r_0z))e^{n\hat u_\infty}
\end{equation}
satisfying
$$\int_{\mr^n}e^{n\hat u_\infty}\ud z\leq 1,\; \int_{\mr^n}|\mu(1+p_{l}(z_0+r_0z))|e^{n\hat u_\infty}\ud z<+\infty.$$
Meanwhile, with help of Lemma \ref{lem: nabla ju}, for any $R>0$, there holds 
$$\int_{B_R(0)}|\Delta\hat u_\infty|\ud z\leq CR^{n-2}$$
which concludes that
$$\frac{1}{|B_R(0)|}\int_{B_R(0)}|\Delta \hat u_\infty|\ud z\to 0,\quad \mathrm{as}\quad R\to \infty.$$
By a simple translation $w(z)=\hat u_\infty(z-\frac{1}{r_0}z_0)+\frac{1}{n}\log\mu$, it is not hard to check
$$(-\Delta)^{\frac{n}{2}}w(z)=(1+p_{l}(r_0z))e^{nw(z)}$$
satisfying 
$$\int_{\mr^n}e^{nw}\ud z\leq \mu,\; \int_{\mr^n}|(1+p_{l}(r_0z))|e^{nw}\ud z<+\infty$$
and 
$$\frac{1}{|B_R(0)|}\int_{B_R(0)}|\Delta w|\ud z\to 0,\quad \mathrm{as}\quad R\to \infty.$$ 
With help of Theorem 2.2 in \cite{Li 23 Q-curvature}, 
we show that $w$ is a normal solution.
Then applying higher order Bol's inequality Theorem 1.2 and Theorem 1.3 in \cite{LW}, 
there holds
$$\int_{\mr^n}e^{nw}\ud z>(n-1)!|\mathbb{S}^n|$$
which contradicts  to \eqref{mu leq (n-1)!S^n}.
Thus we rule out  this "slow bubble" case.

Hence we must have  $$\lim_{k\to\infty}\frac{r_k^{l}}{\lambda_k}=0.$$
Meanwhile,  on $\hat B_R(0)$, up to a subsequence,  there holds
\begin{equation}\label{mu_0 leq 0}
	p_{2l}(\lambda_k^{-\frac{1}{l}}z_k+\lambda_k^{-\frac{1}{l}}r_kz)\to \mu_0\leq 0.
\end{equation}
Thus $\hat u_\infty$ weakly solves the following equation
$$(-\Delta)^{\frac{n}{2}}\hat u_\infty=\mu(1+\mu_0)e^{n\hat u_\infty}\quad \mathrm{on}\;\;\mr^n$$
with the volume 
\begin{equation}\label{volume upper bound}
	\int_{\mr^n}e^{n\hat u_\infty}\ud z\leq 1.
\end{equation}
Samely as before, Lemma \ref{lem: nabla ju} shows that  
$$\int_{ B_{R}(0)}|\Delta \hat u_\infty|\ud z=o(R^n).$$ 
Theorem 2.2 in \cite{Li 23 Q-curvature} shows that $\hat u_\infty$  is a normal solution.
With help of Lemma 2.18 in \cite{Li 23 Q-curvature},   one has
$\mu_0>-1$. Due to the classification theorem in \cite{Lin}, \cite{WX}, and \cite{Mar MZ}, one has 
$$\mu(\mu_0+1)\int_{\mr^n}e^{n\hat u_\infty}\ud z=(n-1)!|\mathbb{S}^n|.$$
With help of  \eqref{mu leq (n-1)!S^n}, \eqref{volume upper bound} and \eqref{mu_0 leq 0},  we have 
$$\mu_0=0,\quad \mu=(n-1)!|\mathbb{S}^n|,\quad \int_{\mr^n}e^{n\hat u_\infty}\ud z=1.$$ In addition,
$$\hat u_\infty-\frac{1}{n}\log\frac{1}{|\mathbb{S}^n|}=\log\frac{2s}{s^2+|z-z_0|^2}$$
for some $s>0 $ and $z_0\in \mathbb{R}^n$.
Moreover, there holds
$$\lim_{k\to\infty}\lambda_k\alpha_{\lambda_k}=(n-1)!|\mathbb{S}^n|.$$
Due to \eqref{lambda alpha_lambda} and Lemma \ref{lem: beta(lambda) upper boud}, up to a subsequence,  there holds
$$\frac{n}{2}\lim_{k\to\infty}\frac{\beta(\lambda_k)}{\log1/\lambda_k}=(n-1)!|\mathbb{S}^n|.$$

Finally, we finish our proof.

\end{proof}

\vspace{3em}
{\bf Acknowledgements:}  The author would like to thank Professor Xingwang Xu, Professor Yuxin Ge, Professor Juncheng Wei  for helpful discussions.

\end{document}